\newtheorem{defi}{Definition}[section]
\newtheorem{teo}[defi]{Theorem}
\newtheorem{prop}[defi]{Proposition}
\newtheorem{cor}[defi]{Corollary}
\newtheorem{os}[defi]{Remark}
\begin{document}
\title[Ising model on Schreier graphs]{Partition functions of the Ising model on some self-similar Schreier graphs}

\author{Daniele D'Angeli}
\address{Daniele D'Angeli. Department of Mathematics, Technion--Israel Institute of
Technology, Technion City, Haifa 32 000.}
\email{dangeli@tx.technion.ac.il}

\author{Alfredo Donno}
\address{Alfredo Donno. Dipartimento di Matematica ``G. Castelnuovo", Sapienza Università di Roma, Piazzale A. Moro, 2, 00185 Roma, Italia.}
\email{donno@mat.uniroma1.it}

\author{Tatiana Nagnibeda}
\address{Tatiana Nagnibeda. Section de Mathématiques, Université de Genève, 2-4, Rue du Lièvre, Case Postale 64, 1211 Genève 4, Suisse}
\email{Tatiana.Smirnova-Nagnibeda@unige.ch}

\keywords{Ising model, partition function, self-similar group,
Schreier graph}

\date{\today}
\begin{abstract}
We study partition functions and thermodynamic limits for the
Ising model on three families of finite graphs converging to
infinite self-similar graphs. They are provided by three
well-known groups realized as automorphism groups of regular
rooted trees: the first Grigorchuk's group of intermediate growth;
the iterated monodromy group of the complex polynomial $z^2-1$
known as the \lq\lq Basilica group\rq\rq ; and the Hanoi Towers
group $H^{(3)}$ closely related to the Sierpi\'nsky gasket.
\end{abstract}
\maketitle
\begin{center}
{\footnotesize{\bf Mathematics Subject Classification (2010):}
Primary 82B20; Secondary 05A15.\footnote{This research has been
supported by the Swiss National Science Foundation Grant
PP0022$_{-}$118946.}}
\end{center}

\section{Introduction}
\subsection{The Ising model}\label{intro1.1}
The famous Ising model of ferromagnetism was introduced by W. Lenz
in 1920, \cite{lenz}, and became the subject of the PhD thesis of
his student E. Ising. It consists of discrete variables called
spins arranged on the vertices of a finite graph $Y$. Each spin
can take values ${\pm 1}$ and  only interacts with its nearest
neighbours. Configuration of spins at two adjacent vertices $i$
and $j$ has energy $J_{i,j}>0$ if the spins have opposite values,
and $-J_{i,j}$ if the values are the same. Let $|Vert(Y)|=N$, and
let $\vec\sigma = (\sigma_1,...,\sigma_N)$  denote the
configuration of spins,  with $\sigma_i\in\{\pm 1\}$. The total
energy of the system in configuration $\vec\sigma$ is then
$$
E(\vec\sigma) =-\sum_{i\sim j}J_{i,j}\sigma_i \sigma_j ,
$$
where we write $i \sim j$ if the vertices $i$ and $j$ are adjacent
in $Y$.

The probability of a particular configuration at temperature $T$
is given by
$$ \mathbb{P}(\vec\sigma) = \frac{1}{Z} \exp (-\beta E(\vec\sigma)) ,$$
where $\beta$ is the "inverse temperature" conventionally defined
as $\beta\equiv1/(k_B T)$, and $k_B$ denotes the Boltzmann
constant.

As usual in statistical physics, the normalizing constant that
makes the distribution above a probability measure is called the
{\it partition function}:
$$
Z=\sum_{\vec\sigma}\exp(-\beta E(\vec\sigma)).
$$
One can rewrite this formula by using $\exp(K\sigma_i \sigma_j) =
\cosh(K) + \sigma_i\sigma_j\sinh (K)$, so as to get the so-called
\lq\lq high temperature expansion\rq\rq\ :
\begin{eqnarray*}
Z &=& \prod_{i\sim j}\cosh (\beta J_{i,j}) \sum_{\vec\sigma} (1+  \sum_{i\sim j} \sigma_i \sigma_j \tanh(\beta J_{i,j})\\
&+& \sum_{\begin{array}{c}\scriptstyle i\sim j \\ \scriptstyle
l\sim m  \end{array}}(\sigma_i\sigma_j)(\sigma_l \sigma_m)
\tanh(\beta J_{i,j}) \tanh(\beta J_{l,m})+ \cdots ).
\end{eqnarray*}
After changing the order of summation, observe that the
non-vanishing terms in $Z$ are exactly those with an even number
of occurrences of each $\sigma_i$. We can interpret this by saying
that non-vanishing terms in this expression are in bijection with
{\it closed polygons} of $Y$, i.e.,  subgraphs in which every
vertex has even degree. Consequently we can rewrite $Z$ as
\begin{eqnarray}\label{polygons}
Z=\big(\prod_{i\sim j}\cosh (\beta J_{i,j})\big)\cdot 2^N\sum_{X
\text{ closed polygon of } Y}\prod_{(i,j)\in Edges(X)} \tanh(\beta
J_{i,j})\ ,
\end{eqnarray}
where in the RHS we have the {\it generating series of closed
polygons} of $Y$ with weighted edges, the weight of an edge
$(i,j)$ being $\tanh(\beta J_{i,j})$.

In the case of constant $J$, the above expression specializes to
$$
Z=\big(\cosh(\beta J)\big)^{|Edges(Y)|}\cdot 2^N \Gamma^{cl}(\tanh(\beta J))
$$
with $\Gamma^{cl}(z)=\sum_{n=0}^\infty A_n^{cl} z^n,$ where
$A_n^{cl}$ is the number of closed polygons with $n$ edges in $Y$.
(In particular, the total number of closed polygons is given by
$\Gamma^{cl}(1)$.)

From the physics viewpoint it is interesting to study the model
when the system (i.e. the number of vertices in the graph) grows.
One way to express this mathematically is to consider growing
sequences of finite graphs converging to an infinite graph. If the
limit
$$
\lim_{n\to \infty} \frac{\log(Z_n)}{|Vert(Y_n)|}
$$
for a sequence of finite graphs $Y_n$ with partition functions
$Z_n$ exists, it is called the {\it thermodynamic limit}.

In the thermodynamic limit, at some critical temperature, a phase
transition can occur between ordered and disordered phase in the
behaviour of the model. Existence of a phase transition depends on
the graph. In his thesis in 1925, Ising studied the case of
one-dimensional Euclidean lattice; computed the partition
functions and the thermodynamic limit and showed that there is no
phase transition \cite{ising}. The Ising model in $\mathbb Z^d$
with $d\geq 2$ undergoes a phase transition. This was first
established for $d=2$ by R. Peierls. At high temperature, $T>T_C$,
the clusters of vertices with equal spins grow similarly for two
different types of spin, whereas for $T<T_C$ the densities of the
types of spin are different and the system ``chooses" one of them.

The infinite graphs that will be studied in this paper all have
{\it finite order} $\mathcal R$ {\it of ramification}, i.e., for
any connected bounded part $\mathcal X$ of the graph there exists
a set $\mathcal A$ of at most $\mathcal R$ vertices such that any
infinite self-avoiding path in the graph that begins in $\mathcal
X$ necessarily goes through $\mathcal A$. Finite order of
ramification ensures that the critical temperature is T=0, and
there is no phase transition in the Ising model (see \cite{Mand}).

Typically, an infinite lattice is viewed as the limit of an
exhaustive sequence of finite subgraphs. This is a simple example
of the so-called {\it pointed Hausdorff-Gromov convergence} (see
Proposition \ref{GrH} below). Another typical case of this
convergence is that of covering graph sequences. In this paper we
will be studying the Ising model on families of finite graphs
coming from the theory of {\it self-similar groups} (see
Definition \ref{defiselfsimilar} below), and their infinite
limits. Any finitely generated group of automorphisms of a regular
rooted tree provides us with a sequence of finite graphs
describing the action of the group on the levels of the tree. When
the action is self-similar the sequence converges in the above
sense to infinite graphs describing the action of the group on the
boundary of the tree. The graphs that we study here are determined
by group actions, and so their edges are labeled naturally by the
generators of the acting group. Different weights on the edges
lead to weighted partition functions, with $J_{i,j}$ depending on
the label of the edge $(i,j)$.

\subsection{Groups of automorphisms of rooted regular trees}\label{preliminarigruppi}
Let $T$ be the infinite regular rooted tree of degree $q$, i.e.,
the rooted tree in which each vertex has $q$ children. Each vertex
of the $n$-th level of the tree can be regarded as a word of
length $n$ in the alphabet $X=\{0,1,\ldots, q-1\}$. Moreover, one
can identify the set $X^{\omega}$ of infinite words in $X$ with
the set $\partial T$ of infinite geodesic rays starting at the
root of $T$. Now let $G<Aut(T)$ be a group acting on $T$ by
automorphisms, generated by a finite symmetric set of generators
$S$. Suppose moreover that the action is transitive on each level
of the tree.
\begin{defi}\label{defischreiernovembre}
The $n$-th {\it Schreier graph} $\Sigma_n$ of the action of $G$ on
$T$, with respect to the generating set $S$, is a graph whose
vertex set coincides with the set of vertices of the $n$-th level
of the tree, and two vertices $u,v$ are adjacent if and only if
there exists $s\in S$ such that $s(u)=v$. If this is the case, the
edge joining $u$ and $v$ is labeled by $s$. For any infinite ray
$\xi\in\partial T$, the {\it orbital Schreier graph} $\Sigma_\xi$
has vertices $G\cdot \xi$ and edges determined by the action of
generators, as above.
\end{defi}
The vertices of $\Sigma_n$ are labeled by words of length $n$ in
$X$ and the edges are labeled by elements of $S$. The Schreier
graph is thus a regular graph of degree $d = |S|$ with $q^n$
vertices, and it is connected since the action of $G$ is
level-transitive.
\begin{defi}\label{defiselfsimilar}
A finitely generated group $G<Aut(T)$ is {\it self-similar} if,
for all $g\in G, x\in X$, there exist $h\in G, y\in X$ such that
$$
g(xw)=yh(w),
$$
for all finite words $w$ in the alphabet $X$.
\end{defi}
Self-similarity implies that $G$ can be embedded into the wreath
product $Sym(q)\wr G$, where $Sym(q)$ denotes the symmetric group
on $q$ elements, so that any automorphism $g\in G$ can be
represented as
$$
g=\tau(g_0,\ldots,g_{q-1}),
$$
where $\tau\in Sym(q)$ describes the action of $g$ on the first
level of $T$ and $g_i\in G, i=0,...,q-1$ is the restriction of $g$
on the full subtree of $T$ rooted at the vertex $i$ of the first
level of $T$ (observe that any such subtree is isomorphic to $T$).
Hence, if $x\in X$ and $w$ is a finite word in $X$, we have
$$
g(xw)=\tau(x)g_x(w).
$$
It is not difficult to see that the orbital Schreier graphs of a
self-similar group are infinite and that the finite Schreier
graphs $\{\Sigma_n\}_{n=1}^\infty$ form a sequence of graph
coverings (see \cite{volo} and references therein for more
information about this interesting class of groups, also known as
{\it automata groups}.)

Take now an infinite ray $\xi\in X^\omega$ and denote by $\xi_n$
the $n$-th prefix of the word $\xi$. Then the sequence of rooted
graphs $\{(\Sigma_n,\xi_n)\}$ converges to the infinite rooted
graph $(\Sigma_\xi, \xi)$ in the space of rooted graphs, in the
following sense.
\begin{prop}\label{GrH} (\cite{Grom}, Chapter 3.) Let $\mathcal{X}$
be the space of connected graphs having a distinguished vertex
called {\it the root}; $\mathcal{X}$ can be endowed with the
following metric: given two rooted graphs $(Y_1,v_1)$ and
$(Y_2,v_2)$,
$$
Dist((Y_1,v_1),(Y_2,v_2)):=\inf\left\{\frac{1}{r+1};\textrm{$B_{Y_1}(v_1,r)$
is isomorphic to $B_{Y_2}(v_2,r)$}\right\}
$$
where $B_Y(v,r)$ is the ball of radius $r$ in $Y$ centered in $v$.
Under the assumption of uniformly bounded degrees, $\mathcal{X}$
endowed with the metric $Dist$ is a compact space.
\end{prop}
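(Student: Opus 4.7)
The plan is to prove compactness by sequential compactness. This rests on two standard observations: that $Dist$ is in fact an ultrametric on the set of rooted graphs up to root-preserving isomorphism (which is the intended meaning of $\mathcal{X}$), and that under a uniform degree bound there are only finitely many isomorphism classes of rooted balls of any given radius. The core of the argument is then a diagonal extraction together with the construction of a limit graph as a direct limit of stabilized balls.

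First I would verify the metric axioms. Symmetry and non-negativity are immediate from the definition, and $Dist((Y_1,v_1),(Y_2,v_2))=0$ forces $B_{Y_1}(v_1,r) \cong B_{Y_2}(v_2,r)$ for every $r$, hence a rooted isomorphism of the graphs themselves. For the triangle inequality, note that if $B_{Y_1}(v_1,r) \cong B_{Y_2}(v_2,r)$ and $B_{Y_2}(v_2,r) \cong B_{Y_3}(v_3,r)$, then transitivity of graph isomorphism gives $B_{Y_1}(v_1,r) \cong B_{Y_3}(v_3,r)$, which yields the strong ultrametric estimate
\begin{equation*}
Dist((Y_1,v_1),(Y_3,v_3)) \leq \max\bigl\{Dist((Y_1,v_1),(Y_2,v_2)),\, Dist((Y_2,v_2),(Y_3,v_3))\bigr\}.
\end{equation*}

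Next I would fix a uniform degree bound, say $d$, and a sequence $\{(Y_n,v_n)\}$ in $\mathcal{X}$. The ball $B_{Y_n}(v_n,r)$ has at most $1+d+d(d-1)+\cdots+d(d-1)^{r-1}$ vertices, and only finitely many graphs exist on a bounded vertex set; in particular only finitely many rooted isomorphism classes can occur as $r$-balls. By pigeonhole, one extracts a subsequence along which $B_{Y_n}(v_n,1)$ is constant up to rooted isomorphism; a further subsequence stabilizes $B_{Y_n}(v_n,2)$; and so on. The diagonal subsequence $(Y_{n_k},v_{n_k})$ then has the property that for every fixed $r$, the ball $B_{Y_{n_k}}(v_{n_k},r)$ is eventually constant (up to root-preserving isomorphism).

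The step I would treat most carefully is the construction of the limit $(Y_\infty,v_\infty)$ and the verification of convergence. Refining the subsequences once more if necessary, I would choose specific representatives $B^{(r)}$ of the stabilized $r$-balls and specific isomorphisms identifying the $r$-ball inside the $(r{+}1)$-ball with $B^{(r)}$ inside $B^{(r+1)}$; this gives a coherent chain of rooted embeddings $B^{(1)} \hookrightarrow B^{(2)} \hookrightarrow \cdots$ whose direct limit is a connected rooted graph $(Y_\infty,v_\infty)$ with $B_{Y_\infty}(v_\infty,r)=B^{(r)}$. By construction $Dist((Y_{n_k},v_{n_k}),(Y_\infty,v_\infty)) \leq 1/(r+1)$ for $k$ large enough, so the diagonal subsequence converges, and $\mathcal{X}$ is compact. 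The main delicate point is ensuring that the chosen ball isomorphisms are compatible with the inclusions $B_{Y_n}(v_n,r) \hookrightarrow B_{Y_n}(v_n,r+1)$, which is a finite combinatorial matter at each stage and can be arranged by a further pigeonhole on how each stabilized $r$-ball sits inside the $(r{+}1)$-ball.
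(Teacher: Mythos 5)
The paper does not prove this proposition at all: it is quoted from Gromov (\cite{Grom}, Chapter 3) as a known fact, so there is no in-paper argument to compare against. Your proof is the standard one and it is correct: $Dist$ is an ultrametric on isomorphism classes of rooted graphs, a degree bound $d$ forces at most $1+d+d(d-1)+\cdots+d(d-1)^{r-1}$ vertices in an $r$-ball and hence finitely many rooted isomorphism types of $r$-balls, and a diagonal extraction followed by a direct limit of a coherent chain of stabilized balls produces the limit point; sequential compactness of a metric space gives compactness. Two small points deserve a word more than you give them. First, the implication ``all balls isomorphic $\Rightarrow$ the rooted graphs are isomorphic'' is not purely formal: one should observe that the restriction maps make the (nonempty, finite) sets of rooted isomorphisms $B_{Y_1}(v_1,r)\to B_{Y_2}(v_2,r)$ into an inverse system, whose inverse limit is nonempty by K\"onig's lemma; this uses local finiteness. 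Second, the identity $B_{Y_\infty}(v_\infty,r)=B^{(r)}$ in the direct limit requires checking that vertices and edges added at later stages neither shorten distances inside $B^{(r)}$ nor bring new vertices within distance $r$ of the root; this holds because balls are induced subgraphs and any path through a vertex outside $B^{(r+1)}$ already has length at least $r+2$, but it is worth stating. With those two remarks your argument is complete.
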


\subsection{Plan of the paper}
\indent Our aim in this paper is to study the Ising model on the
Schreier graphs of three key examples of self-similar groups:\\
\indent- the first Grigorchuk's  group of intermediate (i.e.,
strictly between polynomial and exponential) growth (see
\cite{grigorchuk} for a detailed account and further
references);\\ \indent- the \lq\lq Basilica\rq\rq group that can
be described as the iterated monodromy group of the complex
polynomial $z^2-1$ (see \cite{volo} connections of self-similar
groups to complex dynamics);\\ \indent- and the Hanoi Towers group
$H^{(3)}$ whose action on the ternary tree models the famous Hanoi
Towers game on three pegs, see \cite{hanoi}.

It is known \cite{bondarenkothesis} that the infinite Schreier
graphs associated with these groups (and, more generally, with all
groups generated by {\it bounded automata}) have  finite order of
ramification. Hence the Ising model on these graphs exhibits no
phase transition.

We first compute the partition functions and prove existence of
thermodynamic limit for the model where interactions between
vertices are constant: in Section \ref{chaptergrigorchuk} we treat
the Grigorchuk's group and the Basilica group, and in Section
\ref{chapterHanoi} the Hanoi Towers group $H^{(3)}$ and its close
relative the Sierpi\'{n}ski gasket are considered.

In Section \ref{chapterstatitics}, we study weighted partition
functions for all the graphs previously considered, and we find
the distribution of the number of occurrences of a fixed weight in
a random configuration. The relation between the Schreier graphs
of $H^{(3)}$ and the Sierpi\'{n}ski gasket is also discussed from
the viewpoint of Fisher's theorem establishing a correspondence
between the Ising model on the Sierpi\'{n}ski gasket and the
dimers model on the Schreier graphs of $H^{(3)}$.

\subsection{Acknowledgments} We are grateful to R. Grigorchuk for numerous inspiring discussions and to S. Smirnov for useful remarks on the first version of this paper.


\section{Partition functions and thermodynamic limit for the Grigorchuk's
group and for the Basilica group}\label{chaptergrigorchuk}

\subsection{Grigorchuk's group}\label{gri}
This group admits the following easy description as a self-similar
subgroup of automorphisms of the binary tree. It is generated by
the elements
$$
a=\epsilon(id,id), \ \ \ b=e(a,c), \ \ c=e(a,d), \ \ d=e(id,b),
$$
where $e$ and $\epsilon$ are respectively the trivial and the
non-trivial permutations in $Sym(2)$. These recursive formulae
allow easily to construct finite Schreier graphs for the action of
the group on the binary tree. Here are three first graphs in the
sequence, with loops erased.
\begin{center}
\begin{picture}(300,40)

\letvertex A=(60,20)\letvertex B=(90,20)

\letvertex C=(155,20)\letvertex D=(185,20)

\letvertex E=(225,20)\letvertex F=(255,20)

\drawundirectededge(A,B){$a$} \drawundirectededge(C,D){$a$}

\drawundirectededge(E,F){$a$}

\drawundirectedcurvededge(D,E){$b$}\drawundirectedcurvededge(E,D){$c$}

\put(40,18){$\Sigma_1$} \put(270,18){$\Sigma_2$}

\drawvertex(A){$\bullet$}\drawvertex(B){$\bullet$}

\drawvertex(C){$\bullet$}\drawvertex(D){$\bullet$}

\drawvertex(E){$\bullet$}\drawvertex(F){$\bullet$}

\end{picture}

\end{center}
\begin{center}

\begin{picture}(300,40)

\letvertex A=(30,20)\letvertex B=(60,20)\letvertex C=(100,20)\letvertex D=(130,20)

\letvertex E=(170,20)\letvertex F=(200,20)\letvertex G=(240,20)\letvertex H=(270,20)

\drawundirectededge(A,B){$a$} \drawundirectededge(C,D){$a$}

\drawundirectededge(E,F){$a$} \drawundirectededge(G,H){$a$}

\put(285,18){$\Sigma_3$}

\drawvertex(A){$\bullet$}\drawvertex(B){$\bullet$}

\drawvertex(C){$\bullet$}\drawvertex(D){$\bullet$}

\drawvertex(E){$\bullet$}\drawvertex(F){$\bullet$}

\drawvertex(G){$\bullet$}\drawvertex(H){$\bullet$}

\drawundirectedcurvededge(B,C){$b$}

\drawundirectedcurvededge(C,B){$c$}

\drawundirectedcurvededge(D,E){$b$}

\drawundirectedcurvededge(E,D){$d$}

\drawundirectedcurvededge(F,G){$b$}

\drawundirectedcurvededge(G,F){$c$}
\end{picture}
\end{center}
In general, the Schreier graph $\Sigma_n$ has the same linear
shape, with $2^{n-1}$ simple edges, all labeled by $a$, and
$2^{n-1}-1$ cycles of length 2. It is therefore very easy to
compute the generating function of closed polygons of $\Sigma_n$,
for each $n$.
\begin{teo}
The generating function of closed polygons for the $n$-th Schreier
graph of the Grigorchuk group is $ \Gamma^{cl}_n(z)=
(1+z^2)^{2^{n-1}-1}$. In particular, the number of all closed
polygons in $\Sigma_n$ is $2^{2^{n-1}-1}$.

The partition function of the Ising model is given by
$$
Z_n=\cosh(\beta J)^{3\cdot 2^{n-1}-2}\cdot 2^{2^n}\cdot
\left(1+\tanh^2\left(\beta J\right)\right)^{2^{n-1}-1}
$$
and the thermodynamic limit exists and satisfies:
$$
\lim_{n\to \infty}\frac{\log(Z_n)}{2^n} =
\frac{3}{2}\log(\cosh(\beta J)) +
\log2+\frac{1}{2}\log\left(1+\tanh^2\left(\beta J\right)\right).
$$
\end{teo}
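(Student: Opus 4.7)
The plan is to read the combinatorial data off the pictures of $\Sigma_n$ given above, enumerate closed polygons by hand, and then plug the result into the high-temperature expansion \eqref{polygons}.

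First I would record the vertex and edge counts. The graph $\Sigma_n$ is a linear chain with $|Vert(\Sigma_n)|=2^n$, consisting of $2^{n-1}$ simple $a$-edges alternating with $2^{n-1}-1$ double bundles (each bundle is a $2$-cycle with two parallel edges labeled by two of $b,c,d$). So $|Edges(\Sigma_n)|=2^{n-1}+2(2^{n-1}-1)=3\cdot2^{n-1}-2$, which already matches the exponent of $\cosh(\beta J)$ in the claimed formula.

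The key structural remark is that every $a$-edge is a bridge: cutting it disconnects $\Sigma_n$, so it cannot belong to any cycle and hence to any closed polygon (a closed polygon being an edge-disjoint union of cycles). Each double bundle, on the other hand, is a $2$-cycle: using only one of its two parallel edges would leave both endpoints with odd degree, so a closed polygon must use either both edges of a bundle or neither. Consequently closed polygons are in bijection with subsets of the set of $2^{n-1}-1$ bundles, and a subset of size $k$ gives a polygon with $2k$ edges. Summing over $k$ yields
\begin{equation*}
\Gamma^{cl}_n(z)=\sum_{k=0}^{2^{n-1}-1}\binom{2^{n-1}-1}{k}z^{2k}=(1+z^2)^{2^{n-1}-1},
\end{equation*}
and $\Gamma^{cl}_n(1)=2^{2^{n-1}-1}$.

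Plugging $z=\tanh(\beta J)$ and the edge and vertex counts into the constant-$J$ version of \eqref{polygons} gives the stated formula for $Z_n$. Taking logarithms, dividing by $|Vert(\Sigma_n)|=2^n$, and using $(3\cdot 2^{n-1}-2)/2^n\to 3/2$ and $(2^{n-1}-1)/2^n\to 1/2$ yields the thermodynamic limit. There is no real obstacle beyond the bridge-and-bundle observation; once it is in hand, everything else is a direct computation using the high-temperature expansion.
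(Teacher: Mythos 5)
Your proposal is correct and follows essentially the same route as the paper: identify closed polygons with subsets of the $2^{n-1}-1$ two-cycles, sum the binomial coefficients to get $(1+z^2)^{2^{n-1}-1}$, and substitute into the high-temperature expansion. You in fact justify more carefully than the paper does (via the bridge and parity observations) why $a$-edges never occur and why each $2$-cycle is used entirely or not at all.
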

\begin{proof}
Is is clear that a closed polygon in $\Sigma_n$ is the union of
2-cycles. So we can easily compute the number $A^{cl}_{k,n}$ of
closed polygons with $k$ edges in $\Sigma_n$, for all
$k=0,1,\ldots, 2^n-2$. For $k$ odd, one has $A_{k,n}^{cl}=0$. For
$k$ even, we have to choose $\frac{k}{2}$ cycles of length $2$ to
get a closed polygon with $k$ edges, which implies
$$
A_{k,n}^{cl}= {2^{n-1}-1 \choose  \frac{k}{2}}.
$$
So the generating function of closed polygons for $\Sigma_n$ is
given by
$$
\Gamma^{cl}_n(z)= \sum_{k=0}^{2^{n-1}-1} {2^{n-1}-1 \choose
k}z^{2k} = (1+z^2)^{2^{n-1}-1}.
$$
\end{proof}


\subsection{The  Basilica group}\label{chapterBasilica}
The Basilica group is a self-similar group of automorphisms of the
binary tree generated by the elements
$$
a=e(b,id), \ \ \ \ \ b=\epsilon(a,id).
$$
The following pictures of graphs $\Sigma_n$ for $n=1,2,3,4,5$ with
loops erased give an idea of how finite Schreier graphs of the
Basilica group look like. See \cite{schreierbasilica} for a
comprehensive analysis of finite and infinite Schreier graphs of
this group. Note also that $\{\Sigma_n\}_{n=1}^\infty$ is an
approximating sequence for the Julia set of the polynomial
$z^2-1$, the famous \lq\lq Basilica\rq\rq fractal (see
\cite{volo}).
\begin{center}
\begin{picture}(300,30)

\letvertex A=(30,15)\letvertex B=(70,15)\letvertex C=(150,15)\letvertex D=(190,15)

\letvertex E=(230,15)\letvertex F=(270,15)

\drawvertex(A){$\bullet$}\drawvertex(B){$\bullet$}

\drawvertex(C){$\bullet$}\drawvertex(D){$\bullet$}

\drawvertex(E){$\bullet$}\drawvertex(F){$\bullet$}

\drawundirectedcurvededge(A,B){$b$}

\drawundirectedcurvededge(B,A){$b$}

\drawundirectedcurvededge(C,D){$b$}

\drawundirectedcurvededge(D,C){$b$}

\drawundirectedcurvededge(D,E){$a$}

\drawundirectedcurvededge(E,D){$a$}

\drawundirectedcurvededge(E,F){$b$}

\drawundirectedcurvededge(F,E){$b$} \put(5,12){$\Sigma_1$}

\put(295,12){$\Sigma_2$}

\end{picture}
\end{center}
\begin{center}
\begin{picture}(300,60)

\letvertex A=(55,30)\letvertex B=(95,30)\letvertex C=(135,30)\letvertex D=(155,50)

\letvertex E=(155,10)\letvertex F=(175,30)\letvertex G=(215,30)\letvertex H=(255,30)

\drawvertex(A){$\bullet$}\drawvertex(B){$\bullet$}

\drawvertex(C){$\bullet$}\drawvertex(D){$\bullet$}

\drawvertex(E){$\bullet$}\drawvertex(F){$\bullet$}

\drawvertex(G){$\bullet$}\drawvertex(H){$\bullet$}

\drawundirectededge(C,D){$b$} \drawundirectededge(E,C){$b$}

\drawundirectededge(F,E){$b$} \drawundirectededge(D,F){$b$}

\drawundirectedcurvededge(A,B){$b$}

\drawundirectedcurvededge(B,A){$b$}

\drawundirectedcurvededge(B,C){$a$}

\drawundirectedcurvededge(C,B){$a$}

\drawundirectedcurvededge(F,G){$a$}

\drawundirectedcurvededge(G,F){$a$}

\drawundirectedcurvededge(G,H){$b$}

\drawundirectedcurvededge(H,G){$b$} \put(25,27){$\Sigma_3$}
\end{picture}
\end{center}
\begin{center}
\begin{picture}(300,140)

\letvertex A=(15,70)

\letvertex B=(55,70)\letvertex C=(95,70)\letvertex D=(115,90)\letvertex E=(115,50)

\letvertex F=(135,70)\letvertex G=(155,90)\letvertex H=(155,50)\letvertex I=(175,70)

\letvertex J=(155,130)\letvertex K=(155,10)

\letvertex L=(195,90)

\letvertex M=(195,50)\letvertex N=(215,70)

\letvertex O=(255,70)\letvertex P=(295,70)

\drawvertex(A){$\bullet$}\drawvertex(B){$\bullet$}

\drawvertex(C){$\bullet$}\drawvertex(D){$\bullet$}

\drawvertex(E){$\bullet$}\drawvertex(F){$\bullet$}

\drawvertex(G){$\bullet$}\drawvertex(H){$\bullet$}

\drawvertex(I){$\bullet$}\drawvertex(L){$\bullet$}

\drawvertex(M){$\bullet$}\drawvertex(N){$\bullet$}

\drawvertex(O){$\bullet$}\drawvertex(P){$\bullet$}

\drawvertex(J){$\bullet$}\drawvertex(K){$\bullet$}

\drawundirectedcurvededge(A,B){$b$}\drawundirectedcurvededge(B,A){$b$}

\drawundirectedcurvededge(B,C){$a$}\drawundirectedcurvededge(C,B){$a$}

\drawundirectededge(C,D){$b$} \drawundirectededge(D,F){$b$}

\drawundirectededge(F,E){$b$} \drawundirectededge(E,C){$b$}

\drawundirectededge(F,G){$a$} \drawundirectededge(G,I){$a$}

\drawundirectededge(I,H){$a$} \drawundirectededge(H,F){$a$}

\drawundirectededge(I,L){$b$} \drawundirectededge(L,N){$b$}

\drawundirectededge(N,M){$b$} \drawundirectededge(M,I){$b$}

\drawundirectedcurvededge(G,J){$b$}\drawundirectedcurvededge(J,G){$b$}

\drawundirectedcurvededge(H,K){$b$}\drawundirectedcurvededge(K,H){$b$}

\drawundirectedcurvededge(N,O){$a$}\drawundirectedcurvededge(O,N){$a$}

\drawundirectedcurvededge(O,P){$b$}\drawundirectedcurvededge(P,O){$b$}

\put(20,47){$\Sigma_4$}
\end{picture}
\end{center}
\unitlength=0,3mm
\begin{center}
\begin{picture}(400,210)

\letvertex A=(5,110)\letvertex B=(45,110)\letvertex C=(85,110)\letvertex D=(105,130)

\letvertex E=(105,90)\letvertex F=(125,110)\letvertex G=(145,130)\letvertex H=(145,160)

\letvertex I=(165,110)\letvertex L=(145,90)\letvertex M=(145,60)\letvertex N=(175,140)

\letvertex O=(205,150)\letvertex R=(235,140)\letvertex S=(245,110)\letvertex T=(235,80)

\letvertex U=(205,70)\letvertex V=(175,80)\letvertex P=(205,180)\letvertex Q=(205,210)

\letvertex Z=(205,40)\letvertex J=(205,10)\letvertex K=(265,130)\letvertex X=(285,110)

\letvertex W=(265,90)\letvertex g=(265,160)\letvertex h=(265,60)\letvertex c=(305,130)

\letvertex Y=(305,90)\letvertex d=(325,110)\letvertex e=(365,110)\letvertex f=(405,110)

\drawvertex(A){$\bullet$}\drawvertex(B){$\bullet$}

\drawvertex(C){$\bullet$}\drawvertex(D){$\bullet$}

\drawvertex(E){$\bullet$}\drawvertex(F){$\bullet$}

\drawvertex(G){$\bullet$}\drawvertex(H){$\bullet$}

\drawvertex(I){$\bullet$}\drawvertex(L){$\bullet$}

\drawvertex(M){$\bullet$}\drawvertex(N){$\bullet$}

\drawvertex(O){$\bullet$}\drawvertex(P){$\bullet$}

\drawvertex(J){$\bullet$}\drawvertex(K){$\bullet$}

\drawvertex(Q){$\bullet$}\drawvertex(R){$\bullet$}

\drawvertex(S){$\bullet$}\drawvertex(T){$\bullet$}

\drawvertex(U){$\bullet$}\drawvertex(V){$\bullet$}

\drawvertex(W){$\bullet$}\drawvertex(X){$\bullet$}

\drawvertex(Y){$\bullet$}\drawvertex(Z){$\bullet$}

\drawvertex(g){$\bullet$}\drawvertex(h){$\bullet$}

\drawvertex(c){$\bullet$}\drawvertex(f){$\bullet$}

\drawvertex(d){$\bullet$}\drawvertex(e){$\bullet$}

\drawundirectedcurvededge(A,B){$b$}\drawundirectedcurvededge(B,A){$b$}

\drawundirectedcurvededge(B,C){$a$}\drawundirectedcurvededge(C,B){$a$}

\drawundirectededge(C,D){$b$} \drawundirectededge(D,F){$b$}

\drawundirectededge(F,E){$b$} \drawundirectededge(E,C){$b$}

\drawundirectededge(F,G){$a$} \drawundirectededge(G,I){$a$}

\drawundirectededge(I,L){$a$} \drawundirectededge(L,F){$a$}

\drawundirectedcurvededge(G,H){$b$}\drawundirectedcurvededge(H,G){$b$}

\drawundirectedcurvededge(L,M){$b$}\drawundirectedcurvededge(M,L){$b$}

\drawundirectededge(I,N){$b$} \drawundirectededge(N,O){$b$}

\drawundirectededge(O,R){$b$} \drawundirectededge(R,S){$b$}

\drawundirectededge(S,T){$b$} \drawundirectededge(T,U){$b$}

\drawundirectededge(U,V){$b$} \drawundirectededge(V,I){$b$}

\drawundirectedcurvededge(O,P){$a$}\drawundirectedcurvededge(P,O){$a$}

\drawundirectedcurvededge(Q,P){$b$}\drawundirectedcurvededge(P,Q){$b$}

\drawundirectedcurvededge(U,Z){$a$}\drawundirectedcurvededge(Z,U){$a$}

\drawundirectedcurvededge(Z,J){$b$}\drawundirectedcurvededge(J,Z){$b$}

\drawundirectededge(S,K){$a$} \drawundirectededge(K,X){$a$}

\drawundirectededge(X,W){$a$} \drawundirectededge(W,S){$b$}

\drawundirectededge(X,c){$b$} \drawundirectededge(c,d){$b$}

\drawundirectededge(d,Y){$b$} \drawundirectededge(Y,X){$b$}

\drawundirectedcurvededge(d,e){$a$}\drawundirectedcurvededge(e,d){$a$}

\drawundirectedcurvededge(e,f){$b$}\drawundirectedcurvededge(f,e){$b$}

\drawundirectedcurvededge(K,g){$b$}\drawundirectedcurvededge(g,K){$b$}

\drawundirectedcurvededge(W,h){$b$}\drawundirectedcurvededge(h,W){$b$}

\put(25,60){$\Sigma_5$}
\end{picture}
\end{center}
In general, it follows from the recursive definition of the
generators, that each $\Sigma_n$ is a cactus, i.e., a union of
cycles (in this example all of them are of length power of 2)
arranged in a tree-like way. The maximal length of a cycle in
$\Sigma_n$ is $2^{\lceil\frac{n}{2}\rceil}$. Denote by $a^i_j$ the
number of cycles of length $j$ labeled by $a$ in $\Sigma_i$ and
analogously denote by $b^i_j$ the number of cycles of length $j$
labeled by $b$ in $\Sigma_i$.
\begin{prop}\label{bascycles}
For any $n\geq 4$ consider the Schreier graph $\Sigma_n$ of the
Basilica group. For each $k\geq 1$, the number of cycles of length
$2^k$ labeled by $a$ is
$$
a^n_{2^k} =
\begin{cases}
2^{n-2k-1} & \text{for }1\leq k \leq \frac{n-1}{2}-1\\
2 & \text{for } k = \lfloor\frac{n}{2}\rfloor
\end{cases}, \qquad \mbox{for } n \mbox{ odd},
$$
$$
a^n_{2^k}= \begin{cases}
2^{n-2k-1} & \text{for }1\leq k \leq \frac{n}{2}-1\\
1 & \text{for } k = \frac{n}{2}
\end{cases}, \qquad \mbox{for }  n \mbox{ even}
$$
and the number of cycles of length $2^k$ labeled by $b$ is
$$
b^n_{2^k}=\begin{cases}
2^{n-2k} & \text{for }1\leq k \leq \frac{n-1}{2}-1\\
2 & \text{for } k = \lfloor\frac{n}{2}\rfloor\\
1 & \text{for } k = \lceil\frac{n}{2}\rceil
\end{cases}, \qquad \mbox{for } n \mbox{ odd},
$$
$$
b^n_{2^k}=\begin{cases}
2^{n-2k} & \text{for }1\leq k \leq \frac{n}{2}-1\\
2& \text{for } k = \frac{n}{2}
\end{cases}, \qquad \mbox{for }  n \mbox{ even}.
$$
\end{prop}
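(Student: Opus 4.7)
The plan is to induct on $n$ after deriving two simple recursions for the cycle counts, using the wreath product decomposition $a=e(b,id)$ and $b=\epsilon(a,id)$.

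First I would analyse the $a$-action on $\Sigma_n$. Since $a(0u)=0\,b(u)$ and $a(1u)=1u$, the generator $a$ fixes every vertex starting with $1$ (these are the $2^{n-1}$ erased loops), while on vertices starting with $0$ it is conjugate to the $b$-action on $\Sigma_{n-1}$. Consequently, non-trivial $a$-cycles in $\Sigma_n$ are in bijection with $b$-cycles in $\Sigma_{n-1}$ of the same length, giving
\[
a^n_{2^k}=b^{n-1}_{2^k}, \qquad k\geq 1.
\]

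Next I would trace the $b$-orbits. From $b(0u)=1\,a(u)$ and $b(1u)=0u$, the orbit of $0u$ visits
\[
0u,\ 1a(u),\ 0a(u),\ 1a^2(u),\ 0a^2(u),\ \ldots
\]
and closes after exactly $2\,\mathrm{ord}_a(u)$ steps, where $\mathrm{ord}_a(u)$ is the length of the $a$-orbit of $u$ in $\Sigma_{n-1}$. Hence each $a$-orbit in $\Sigma_{n-1}$, loops included, gives rise to one $b$-cycle in $\Sigma_n$ of twice its length. The $2^{n-2}$ vertices of $\Sigma_{n-1}$ starting with $1$ are precisely the $a$-fixed points, so
\[
b^n_2=2^{n-2},
\]
while the non-trivial $a$-cycles yield
\[
b^n_{2^k}=a^{n-1}_{2^{k-1}}=b^{n-2}_{2^{k-1}}, \qquad k\geq 2,
\]
the second equality being the $a$-recursion applied one level down. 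Observe that this already confirms the implicit claim that all cycle lengths are powers of $2$.

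With the recursions in hand, the proposition follows by induction on $n$, split by parity. The base cases $n=2,3$ are read off from the pictures: $a^2_2=1$, $b^2_2=2$, $a^3_2=2$, $b^3_2=2$, $b^3_4=1$. For $n$ odd one invokes $a^n_{2^k}=b^{n-1}_{2^k}$ with the even formulas for $n-1$, together with $b^n_2=2^{n-2}$ and $b^n_{2^k}=b^{n-2}_{2^{k-1}}$ with the odd formulas for $n-2$; the three ranges $1\leq k\leq \lfloor n/2\rfloor-1$, $k=\lfloor n/2\rfloor$, $k=\lceil n/2\rceil$ of the claimed formulas match precisely what the inductive hypothesis delivers, and the even case is symmetric.

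The only step that demands genuine care is the cycle-tracing argument for $b$: verifying that the orbit through $0u$ really closes up in $2\,\mathrm{ord}_a(u)$ steps and not earlier, and that distinct $a$-orbits in $\Sigma_{n-1}$ produce distinct $b$-cycles in $\Sigma_n$. Once this bijection, together with the analogous but simpler statement for $a$, is secured, the remainder of the proof is routine bookkeeping over the three cases indexed by $k$.
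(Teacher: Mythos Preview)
Your argument is correct and is essentially the same as the paper's: both derive from the wreath recursion the identities $a^n_{2^k}=b^{n-1}_{2^k}$, $b^n_2=a^{n-1}_1=2^{n-2}$, and $b^n_{2^k}=a^{n-1}_{2^{k-1}}$, and then read off the closed formulas. The only cosmetic difference is that you induct directly on $n$ whereas the paper iterates the recursion to reduce everything to $a^{n-2(k-1)}_2$ and $b^{n-2(k-1)}_2$; your explicit cycle-tracing for the $b$-orbits is in fact more detailed than what the paper writes.
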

\begin{proof}
The recursive formulae for the generators imply that, for each
$n\geq 3$, one has
$$
a^n_2 = b^{n-1}_2 \ \ \ \mbox{and } \ \ \ b^n_2 = a^{n-1}_1 =
2^{n-2}
$$
and in general $a^n_{2^k} = a^{n-2(k-1)}_2$ and $b^n_{2^k} =
b^{n-2(k-1)}_2$. In particular, for each $n\geq 4$, the number of
$2$-cycles labeled by $a$ is $2^{n-3}$ and the number of
$2$-cycles labeled by $b$ is $2^{n-2}$. More generally, the number
of cycles of length $2^k$ is given by
$$
a^n_{2^k} = 2^{n-2k-1}, \ \ \ \ b^n_{2^k}=2^{n-2k},
$$
where the last equality is true if $n-2k+2 \geq 4$, i.e. for
$k\leq \frac{n}{2}-1$. Finally, for $n$ odd, there is only one
cycle of length $2^{\lceil\frac{n}{2}\rceil}$ labeled by $b$ and
four cycles of length $2^{\lfloor\frac{n}{2}\rfloor}$, two of them
labeled by $a$ and two labeled by $b$; for $n$ even, there are
three cycles of length $2^{\frac{n}{2}}$, two of them labeled by
$b$ and one labeled by $a$.
\end{proof}
\begin{cor}
For each $n\geq 4$, the number of cycles labeled by $a$ in the
Schreier graph $\Sigma_n$ of the Basilica group is
$$
\begin{cases}
\frac{2^{n-1}+2}{3}  & \ \text{for}\ n \ \text{odd},\\
\frac{2^{n-1}+1}{3}  & \ \text{for}\ n \ \text{even}.
\end{cases}
$$
and the number of $b$-cycles in $\Sigma_n$ is
$$
\begin{cases}
\frac{2^n+1}{3}  & \ \text{for}\  n \ \text{odd},\\
\frac{2^n+2}{3}  & \ \text{for}\  n \ \text{even}.
\end{cases}
$$
The total number of cycles of length $\geq 2$ is $2^{n-1}+1$ and
the total number of edges, without loops, is $3\cdot 2^{n-1}$.
\end{cor}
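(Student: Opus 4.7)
The plan is to derive each count in the corollary by summing the cycle-length distribution provided by Proposition \ref{bascycles}, handling the parity cases separately. Since Proposition \ref{bascycles} enumerates cycles of length $2^k$ by their label, the total count of $a$-cycles (respectively $b$-cycles) is obtained by summing $a^n_{2^k}$ (respectively $b^n_{2^k}$) over all admissible $k\geq 1$; the resulting sums are geometric series of ratio $4$, with an extra boundary term coming from the largest cycles.

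Concretely, for $n$ odd I would compute
$$
\sum_{k=1}^{(n-3)/2} 2^{n-2k-1}+2 \;=\; \frac{2^{n-1}-4}{3}+2 \;=\; \frac{2^{n-1}+2}{3},
$$
which gives the number of $a$-cycles, and analogously
$$
\sum_{k=1}^{(n-3)/2} 2^{n-2k}+2+1 \;=\; \frac{2^n-8}{3}+3 \;=\; \frac{2^n+1}{3}
$$
for the $b$-cycles. The even case is identical in spirit: the geometric sums $\sum_{k=1}^{n/2-1} 2^{n-2k-1}=\frac{2^{n-1}-2}{3}$ and $\sum_{k=1}^{n/2-1} 2^{n-2k}=\frac{2^n-4}{3}$ combined with the isolated top term (one $a$-cycle and two $b$-cycles of length $2^{n/2}$) yield the remaining two formulae.

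Adding the $a$-cycle and $b$-cycle counts gives $\frac{2^{n-1}+2+2^n+1}{3}=2^{n-1}+1$ in the odd case, and $\frac{2^{n-1}+1+2^n+2}{3}=2^{n-1}+1$ in the even case, so the total number of cycles of length $\geq 2$ is $2^{n-1}+1$ regardless of parity. For the edge count I would weight each cycle by its length: the total is $\sum_{k\geq 1} 2^k\bigl(a^n_{2^k}+b^n_{2^k}\bigr)$. Plugging in Proposition \ref{bascycles} one sees that each interior term contributes $3\cdot 2^{n-k-1}$, and the boundary contribution (from the largest cycles) exactly cancels the telescoping tail, leaving $3\cdot 2^{n-1}$ edges.

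I do not expect any real obstacle here, since the computation is a straightforward bookkeeping of two geometric series in each parity class. The only point requiring care is the bookkeeping of the boundary terms at $k=\lfloor n/2\rfloor$ and $k=\lceil n/2\rceil$, which differ between the odd and even cases and must be kept separate from the generic geometric sum; verifying that the final closed forms are parity-independent for the totals (cycles and edges, but not for the $a$- and $b$-cycle counts taken individually) serves as a useful consistency check.
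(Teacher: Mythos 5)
Your computation is correct and is exactly what the paper intends: the corollary is stated there without a separate proof, as an immediate consequence of summing the cycle counts of Proposition \ref{bascycles} over $k$, which is precisely your geometric-series bookkeeping (and your edge count agrees with the paper's $2^{n-1}$ $a$-edges and $2^n$ $b$-edges). No gaps.
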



The computations above lead to the following formula for the
partition function of the Ising model on the Schreier graphs
$\Sigma_n$ associated with the action of the Basilica group.
\begin{teo}\label{basilicapartfunct}
The partition function of the Ising model on the $n$-th Schreier
graph $\Sigma_n$ of the Basilica group is
$$
Z_n=2^{2^n}\cdot \cosh(\beta J)^{3\cdot
2^{n-1}}\cdot\Gamma^{cl}_n(\tanh(\beta J)),
$$
where $\Gamma_n^{cl}(z)$ is the generating function of closed
polygons for $\Sigma_n$ given by
$$
\Gamma^{cl}_n(z) =
\prod_{k=1}^{\frac{n-1}{2}-1}\left(1+z^{2^k}\right)^{3\cdot
2^{n-2k-1}}\cdot \left(1+z^{2^{\frac{n-1}{2}}}\right)^4\cdot
\left(1+z^{2^{\frac{n+1}{2}}}\right),
$$
for $n\geq 5$ odd and
$$
\Gamma^{cl}_n(z) =
\prod_{k=1}^{\frac{n}{2}-1}\left(1+z^{2^k}\right)^{3\cdot
2^{n-2k-1}}\cdot \left(1+z^{2^{\frac{n}{2}}}\right)^3,
$$
for $n\geq 4$ even. Moreover, $\Gamma_1^{cl}=1+z^2$,
$\Gamma_2^{cl}=(1+z^2)^3$ and $\Gamma_3^{cl}=(1+z^2)^4(1+z^4)$.
\end{teo}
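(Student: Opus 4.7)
The plan is to combine the high-temperature expansion (\ref{polygons}) with the structural observation that each $\Sigma_n$ is a cactus, together with the cycle counts already established in Proposition~\ref{bascycles}.

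First, from the preceding corollary, the number of vertices of $\Sigma_n$ is $2^n$ and the number of non-loop edges is $3\cdot 2^{n-1}$. Substituting these into (\ref{polygons}) with constant coupling $J$ immediately produces the prefactor $2^{2^n}\cdot\cosh(\beta J)^{3\cdot 2^{n-1}}$, so the statement reduces to identifying the generating function $\Gamma_n^{cl}$.

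The key structural step I would carry out is the following bijection in any cactus $Y$: the closed polygons of $Y$ are exactly the edge-disjoint unions of subsets of the cycles of $Y$. Indeed, any such union is trivially Eulerian because distinct cycles in a cactus share no edges; conversely, if a closed polygon $X$ used a proper non-empty subset of the edges of some cycle $C$, then the endpoints of a maximal arc of $X$ inside $C$ would have odd degree in $X$, since no other cycle contributes edges at those vertices. Hence
\begin{equation*}
\Gamma_n^{cl}(z)=\prod_{C}\bigl(1+z^{|C|}\bigr),
\end{equation*}
the product running over all cycles of $\Sigma_n$.

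Finally, I substitute the counts from Proposition~\ref{bascycles}. For $n\geq 4$ even, each intermediate length $2^k$ with $1\leq k\leq n/2-1$ contributes $a^n_{2^k}+b^n_{2^k}=2^{n-2k-1}+2^{n-2k}=3\cdot 2^{n-2k-1}$ cycles, and the top length $2^{n/2}$ contributes $1+2=3$ cycles, which reproduces the stated product. For $n\geq 5$ odd, the intermediate range gives the same exponent $3\cdot 2^{n-2k-1}$, the length $2^{(n-1)/2}$ contributes $2+2=4$ cycles, and the length $2^{(n+1)/2}$ contributes a single cycle. The three small cases $n=1,2,3$ are read off directly from the pictures. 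The main obstacle is the cactus-to-cycles bijection; once that is established, everything else is bookkeeping from Proposition~\ref{bascycles}.
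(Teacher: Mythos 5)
Your proposal is correct and follows essentially the same route as the paper: read off the prefactor $2^{|Vert|}\cosh(\beta J)^{|Edges|}$ from the high-temperature expansion, observe that in the cactus $\Sigma_n$ the closed polygons are exactly the unions of whole cycles, and then substitute the cycle counts of Proposition~\ref{bascycles} (the paper simply asserts the cactus-to-cycles correspondence, whereas you try to justify it, which is a welcome addition). One small caveat: your justification of that correspondence is imprecise, since the endpoints of a maximal arc of $X$ inside a cycle $C$ may well be cut vertices at which \emph{other} cycles do contribute edges; the statement is still true, but it needs the global tree-like arrangement --- e.g.\ argue on a leaf cycle of the block structure and induct, or note that the $2^{n-1}+1$ edge-disjoint cycles form a basis of the cycle space because their number equals $|Edges|-|Vert|+1$, so every even subgraph is a union of some of them.
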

\begin{proof}
Recall that $Z_n = 2^{|Vert(\Sigma_n)|}\cosh(\beta
J)^{|Edges(\Sigma_n)|}\cdot\Gamma_n^{cl}(\tanh(\beta J))$, where
$\Gamma_n^{cl}(z)$ is the generating function of closed polygons
in $\Sigma_n$. In our case we have $|Edges(\Sigma_n)| \\= 3\cdot
2^{n-1}$ and $|Vert(\Sigma_n)|= 2^n$.

The formulae for $\Gamma_n^{cl}(z)$ with $n=1,2,3$ can be directly
verified. For $n\geq 4$, we can use Proposition \ref{bascycles}.
Since the length of each cycle of $\Sigma_n$ is even, it is clear
that the coefficient $A^{cl}_{k,n}$ is zero for every odd $k$. The
coefficient $A^{cl}_{k,n}$ is nonzero for every even $k$ such that
$0 \leq k \leq 3\cdot 2^{n-1}$. In fact, $3\cdot 2^{n-1}$ is the
total number of edges of $\Sigma_n$ ($2^n$ labeled by $b$ and
$2^{n-1}$ labeled by $a$). By taking the exact number of cycles of
length $2^i$ in $\Sigma_n$, we get the assertion.
\end{proof}
\begin{teo}\label{EXISTENCEbasilica}
The thermodynamic limit $ \lim_{n\to
\infty}\frac{\log(Z_n)}{|Vert(\Sigma_n)|}$ exists.
\end{teo}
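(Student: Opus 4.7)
The plan is to combine the explicit formula from Theorem \ref{basilicapartfunct} with the fact that $|Vert(\Sigma_n)|=2^n$, so that
\[
\frac{\log Z_n}{2^n}=\log 2+\frac{3}{2}\log\cosh(\beta J)+\frac{\log \Gamma_n^{cl}(\tanh(\beta J))}{2^n}.
\]
The first two summands are constants independent of $n$, so all the work is in showing that the last term converges as $n\to\infty$. Set $z=\tanh(\beta J)\in(0,1)$ and write $L_n:=2^{-n}\log\Gamma_n^{cl}(z)$.

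The second step is to take logarithms of the product formulas in Theorem \ref{basilicapartfunct}. For $n=2m$ even this gives
\[
L_{2m}=\sum_{k=1}^{m-1}\frac{3}{2^{2k+1}}\log(1+z^{2^k})+\frac{3}{2^{2m}}\log(1+z^{2^m}),
\]
and for $n=2m+1$ odd,
\[
L_{2m+1}=\sum_{k=1}^{m-1}\frac{3}{2^{2k+1}}\log(1+z^{2^k})+\frac{4}{2^{2m+1}}\log(1+z^{2^m})+\frac{1}{2^{2m+1}}\log(1+z^{2^{m+1}}).
\]
Since $0<z<1$ and $\log(1+z^{2^k})\le z^{2^k}\le z^{2k}$, each summand is positive and bounded, and the series $\sum_{k\ge 1}\frac{3}{2^{2k+1}}\log(1+z^{2^k})$ converges absolutely (it is dominated by $\frac{3}{2}\sum_{k\ge 1}2^{-2k}=\frac{1}{2}$).

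The third step is to check that the boundary terms in both parities vanish in the limit: the terms involving $z^{2^m}$ and $z^{2^{m+1}}$ are multiplied by $O(2^{-2m})$, hence go to $0$ as $m\to\infty$. Therefore both subsequences $L_{2m}$ and $L_{2m+1}$ converge to the same limit
\[
L_\infty=\sum_{k=1}^{\infty}\frac{3}{2^{2k+1}}\log\bigl(1+\tanh^{2^k}(\beta J)\bigr),
\]
which proves existence of $\lim_{n\to\infty}L_n$, and hence
\[
\lim_{n\to\infty}\frac{\log Z_n}{|Vert(\Sigma_n)|}=\log 2+\frac{3}{2}\log\cosh(\beta J)+L_\infty.
\]

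There is no real obstacle: the computation in Theorem \ref{basilicapartfunct} already packages the generating function as a product indexed by cycle lengths, and the exponents $3\cdot 2^{n-2k-1}$ are of order $2^{n}/4^{k}$, which is precisely what is needed for $2^{-n}\log\Gamma_n^{cl}$ to produce a convergent geometric-type series. The only point requiring slight care is reconciling the two parities, which is handled by noting that the contributions of the two or three longest cycles are of the form $O(1)\cdot 2^{-n}\log(1+z^{2^{\lceil n/2\rceil}})=o(1)$, and all other terms appear with identical coefficients in both parities.
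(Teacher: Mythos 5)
Your proof is correct and follows essentially the same route as the paper: take logarithms of the product formula from Theorem \ref{basilicapartfunct}, observe that the coefficients $3\cdot 2^{n-2k-1}/2^n = 3/2^{2k+1}$ yield an absolutely convergent series, and check that the boundary (longest-cycle) terms vanish. You are in fact slightly more careful than the paper, which only writes out the even case, whereas you verify explicitly that both parities converge to the same limit.
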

\begin{proof}
Since $|Edges(\Sigma_n)| = 3\cdot 2^{n-1}$ and $|Vert(\Sigma_n)|=
2^n$, the limit reduces to (choosing, for example, $n$ even)
$$
\log(2) + \frac{3}{2}\log(\cosh(\beta J)) +
\lim_{n\to\infty}\frac{\log(\Gamma_n^{cl}(z))}{2^n},
$$
where $z=\tanh (\beta J)$ takes values between 0 and 1. Now
\begin{eqnarray*}
\lim_{n\to \infty}\frac{\log(\Gamma_n^{cl}(z))}{2^n} &=&
\lim_{n\to \infty}\frac{\sum_{k=1}^{\frac{n}{2}-1}3\cdot
2^{n-2k-1}\log(1+z^{2^k})+3\log(1+z^{2^{\frac{n}{2}}})}{2^n}\\ &=&
\frac{3}{2}\sum_{k=1}^{\infty}\frac{\log(1+z^{2^k})}{4^k}+\lim_{n\to\infty}\frac{3\log(1+z^{2^{\frac{n}{2}}})}{2^n}\\&\leq&
\frac{3}{2}\sum_{k=1}^{\infty}\frac{\log(2)}{4^k}<\infty,
\end{eqnarray*}
giving the assertion.
\end{proof}

\section{Partition functions and thermodynamic limits for the Hanoi Towers group $H^{(3)}$ and for the Sierpi\'nski gasket} \label{chapterHanoi}
\subsection{Hanoi Towers group $H^{(3)}$}\label{grafohanoigrafo}

The Hanoi Towers group $H^{(3)}$ is  generated by three
automorphisms of the ternary rooted tree admitting the following
self-similar presentation \cite{hanoi}:
$$
a= (01)(id,id,a) \ \ \ \ b= (02)(id,b,id) \ \ \ \ c=(12)(c,id,id),
$$
where $(01), (02)$ and $(12)$ are transpositions in $Sym(3)$.  The
associated Schreier graphs are self-similar in the sense of
\cite{wagner2}, that is, each $\Sigma_{n+1}$ contains three copies
of $\Sigma_n$  glued together by three edges. These graphs can be
recursively constructed via the following substitutional rules
\cite{hanoi}: \unitlength=0,4mm
\begin{center}
\begin{picture}(400,115)
\letvertex A=(190,10)\letvertex B=(210,44)

\letvertex C=(230,78)\letvertex D=(250,112)

\letvertex E=(270,78)\letvertex F=(290,44)

\letvertex G=(310,10)\letvertex H=(270,10)\letvertex I=(230,10)

\letvertex L=(62,30)\letvertex M=(122,30)

\letvertex N=(92,80)

\put(186,0){$00u$}\put(193,42){$20u$}\put(213,75){$21u$}

\put(245,116){$11u$}\put(273,75){$01u$}\put(293,42){$02u$}\put(303,0){$22u$}

\put(265,0){$12u$}\put(225,0){$10u$}

\put(59,20){$0u$}\put(118,20){$2u$}\put(87,84){$1u$}\put(153,60){$\Longrightarrow$}

\put(10,60){Rule I}

\drawvertex(A){$\bullet$}\drawvertex(B){$\bullet$}

\drawvertex(C){$\bullet$}\drawvertex(D){$\bullet$}

\drawvertex(E){$\bullet$}\drawvertex(F){$\bullet$}

\drawvertex(G){$\bullet$}\drawvertex(H){$\bullet$}

\drawvertex(I){$\bullet$}

\drawundirectededge(A,B){$b$}\drawundirectededge(B,C){$a$}\drawundirectededge(C,D){$c$}

\drawundirectededge(D,E){$a$}\drawundirectededge(E,C){$b$}\drawundirectededge(E,F){$c$}\drawundirectededge(F,G){$b$}

\drawundirectededge(B,I){$c$}\drawundirectededge(H,F){$a$}\drawundirectededge(H,I){$b$}

\drawundirectededge(I,A){$a$}\drawundirectededge(G,H){$c$}

\drawvertex(L){$\bullet$}

\drawvertex(M){$\bullet$}\drawvertex(N){$\bullet$}

\drawundirectededge(M,L){$b$}\drawundirectededge(N,M){$c$}\drawundirectededge(L,N){$a$}
\end{picture}
\end{center}
\begin{center}
\begin{picture}(400,105)
\letvertex A=(190,10)\letvertex B=(210,44)

\letvertex C=(230,78)\letvertex D=(250,112)

\letvertex E=(270,78)\letvertex F=(290,44)

\letvertex G=(310,10)\letvertex H=(270,10)\letvertex I=(230,10)

\letvertex L=(62,30)\letvertex M=(122,30)

\letvertex N=(92,80)

\put(186,0){$00u$}\put(193,42){$10u$}\put(213,75){$12u$}

\put(245,116){$22u$}\put(273,75){$02u$}\put(293,42){$01u$}\put(303,0){$11u$}

\put(265,0){$21u$}\put(225,0){$20u$}

\put(59,20){$0u$}\put(118,20){$1u$}\put(87,84){$2u$}\put(153,60){$\Longrightarrow$}

\put(10,60){Rule II}

\drawvertex(A){$\bullet$}\drawvertex(B){$\bullet$}

\drawvertex(C){$\bullet$}\drawvertex(D){$\bullet$}

\drawvertex(E){$\bullet$}\drawvertex(F){$\bullet$}

\drawvertex(G){$\bullet$}\drawvertex(H){$\bullet$}

\drawvertex(I){$\bullet$}

\drawundirectededge(A,B){$a$}\drawundirectededge(B,C){$b$}\drawundirectededge(C,D){$c$}

\drawundirectededge(D,E){$b$}\drawundirectededge(E,C){$a$}\drawundirectededge(E,F){$c$}\drawundirectededge(F,G){$a$}

\drawundirectededge(B,I){$c$}\drawundirectededge(H,F){$b$}\drawundirectededge(H,I){$a$}

\drawundirectededge(I,A){$b$}\drawundirectededge(G,H){$c$}

\drawvertex(L){$\bullet$}

\drawvertex(M){$\bullet$}\drawvertex(N){$\bullet$}

\drawundirectededge(M,L){$a$}\drawundirectededge(N,M){$c$}\drawundirectededge(L,N){$b$}
\end{picture}
\end{center}
\begin{center}
\begin{picture}(400,70)
\letvertex A=(20,10)\letvertex B=(70,10)

\letvertex C=(135,10)\letvertex D=(185,10)

\letvertex E=(250,10)\letvertex F=(300,10)

\letvertex G=(20,50)\letvertex H=(70,50)

\letvertex I=(135,50)\letvertex L=(185,50)

\letvertex M=(250,50)\letvertex N=(300,50)

\put(15,53){$0u$}\put(15,0){$0v$}

\put(65,0){$00v$}\put(65,53){$00u$}\put(130,0){$1v$}\put(130,53){$1u$}\put(180,0){$11v$}

\put(180,53){$11u$}\put(245,0){$2v$}\put(245,53){$2u$}\put(295,0){$22v$}\put(295,53){$22u$}

\put(38,27){$\Longrightarrow$}\put(153,27){$\Longrightarrow$}\put(268,27){$\Longrightarrow$}

\put(30,70){Rule III} \put(145,70){Rule IV} \put(262,70){Rule V}

\drawvertex(A){$\bullet$}\drawvertex(B){$\bullet$}

\drawvertex(C){$\bullet$}\drawvertex(D){$\bullet$}

\drawvertex(E){$\bullet$}\drawvertex(F){$\bullet$}

\drawvertex(G){$\bullet$}\drawvertex(H){$\bullet$}

\drawvertex(I){$\bullet$}\drawvertex(L){$\bullet$}

\drawvertex(M){$\bullet$}\drawvertex(N){$\bullet$}

\drawundirectededge(A,G){$c$}\drawundirectededge(B,H){$c$}\drawundirectededge(C,I){$b$}

\drawundirectededge(D,L){$b$}\drawundirectededge(E,M){$a$}\drawundirectededge(F,N){$a$}
\end{picture}
\end{center}
The starting point is the Schreier graph $\Sigma_1$ of the first
level. \unitlength=0,3mm
\begin{center}
\begin{picture}(400,128)

\letvertex A=(220,10)\letvertex B=(240,44)

\letvertex C=(260,78)\letvertex D=(280,112)

\letvertex E=(300,78)\letvertex F=(320,44)

\letvertex G=(340,10)\letvertex H=(300,10)\letvertex I=(260,10)

\letvertex L=(80,30)\letvertex M=(140,30)

\letvertex N=(110,80)

\put(216,60){$\Sigma_2$} \put(72,60){$\Sigma_1$}

\drawvertex(A){$\bullet$}\drawvertex(B){$\bullet$}

\drawvertex(C){$\bullet$}\drawvertex(D){$\bullet$}

\drawvertex(E){$\bullet$}\drawvertex(F){$\bullet$}

\drawvertex(G){$\bullet$}\drawvertex(H){$\bullet$}

\drawvertex(I){$\bullet$}

\drawundirectededge(A,B){$b$}\drawundirectededge(B,C){$a$}\drawundirectededge(C,D){$c$}

\drawundirectededge(D,E){$a$}\drawundirectededge(E,C){$b$}\drawundirectededge(E,F){$c$}\drawundirectededge(F,G){$b$}

\drawundirectededge(B,I){$c$}\drawundirectededge(H,F){$a$}\drawundirectededge(H,I){$b$}

\drawundirectededge(I,A){$a$}\drawundirectededge(G,H){$c$}

\drawvertex(L){$\bullet$}

\drawvertex(M){$\bullet$}\drawvertex(N){$\bullet$}

\drawundirectededge(M,L){$b$}\drawundirectededge(N,M){$c$}\drawundirectededge(L,N){$a$}

\drawundirectedloop[l](A){$c$}\drawundirectedloop(D){$b$}\drawundirectedloop[r](G){$a$}\drawundirectedloop[r](M){$a$}

\drawundirectedloop(N){$b$}\drawundirectedloop[l](L){$c$}
\end{picture}
\end{center}
\begin{os}\rm
Observe that, for each $n\geq 1$, the graph $\Sigma_n$ has three
loops, at the vertices $0^n,1^n$ and $2^n$, labeled by $c,b$ and
$a$, respectively. Moreover, these are the only loops in
$\Sigma_n$. The Ising model will be studied on $\Sigma_n$
considered without loops.
\end{os}
Let us now proceed to the computation of closed polygons in
$\Sigma_n$. Denote by $P_n$ the set of closed polygons in
$\Sigma_n$, and by $L_n$ the set of all subgraphs of $\Sigma_n$
consisting of self-avoiding paths joining the left-most vertex to
the right-most vertex in $\Sigma_n$, together with closed polygons
having no common edge with the path.
\begin{center}
\begin{picture}(400,120)
\letvertex A=(80,10)\letvertex B=(100,44)

\letvertex C=(120,78)\letvertex D=(140,112)

\letvertex E=(160,78)\letvertex F=(180,44)

\letvertex G=(200,10)\letvertex H=(160,10)\letvertex I=(120,10)

\drawvertex(A){$\bullet$}\drawvertex(B){$\bullet$}

\drawvertex(C){$\bullet$}\drawvertex(D){$\bullet$}

\drawvertex(E){$\bullet$}\drawvertex(F){$\bullet$}

\drawvertex(G){$\bullet$}\drawvertex(H){$\bullet$}

\drawvertex(I){$\bullet$}

\thicklines \drawundirectededge(A,B){}\drawundirectededge(B,C){}

\drawundirectededge(E,C){}\drawundirectededge(H,F){}

\drawundirectededge(E,F){}\drawundirectededge(H,I){}\drawundirectededge(I,A){}

\thinlines \drawundirectededge(C,D){}

\drawundirectededge(D,E){}\drawundirectededge(G,H){}

\drawundirectededge(F,G){}\drawundirectededge(B,I){}

\letvertex a=(240,10)\letvertex b=(260,44)

\letvertex c=(280,78)\letvertex d=(300,112)

\letvertex e=(320,78)\letvertex f=(340,44)

\letvertex g=(360,10)\letvertex h=(320,10)\letvertex i=(280,10)

\drawvertex(a){$\bullet$}\drawvertex(b){$\bullet$}

\drawvertex(c){$\bullet$}\drawvertex(d){$\bullet$}

\drawvertex(e){$\bullet$}\drawvertex(f){$\bullet$}

\drawvertex(g){$\bullet$}\drawvertex(h){$\bullet$}

\drawvertex(i){$\bullet$}

\thicklines

\drawundirectededge(c,d){}\drawundirectededge(d,e){}\drawundirectededge(e,c){}

\drawundirectededge(i,a){}

\drawundirectededge(b,i){}\drawundirectededge(a,b){}

\thinlines

\drawundirectededge(g,h){}\drawundirectededge(e,f){}\drawundirectededge(b,c){}\drawundirectededge(h,f){}

\drawundirectededge(f,g){} \drawundirectededge(h,i){}

\put(180,-5){Two elements of $P_2$.}
\end{picture}
\end{center}
\begin{center}
\begin{picture}(400,120)
\letvertex A=(80,10)\letvertex B=(100,44)

\letvertex C=(120,78)\letvertex D=(140,112)

\letvertex E=(160,78)\letvertex F=(180,44)

\letvertex G=(200,10)\letvertex H=(160,10)\letvertex I=(120,10)

\drawvertex(A){$\bullet$}\drawvertex(B){$\bullet$}

\drawvertex(C){$\bullet$}\drawvertex(D){$\bullet$}

\drawvertex(E){$\bullet$}\drawvertex(F){$\bullet$}

\drawvertex(G){$\bullet$}\drawvertex(H){$\bullet$}

\drawvertex(I){$\bullet$}

\thicklines  \drawundirectededge(E,C){}\drawundirectededge(H,F){}

\drawundirectededge(A,B){}\drawundirectededge(B,C){}\drawundirectededge(E,F){}\drawundirectededge(G,H){}

\put(180,-5){Two elements of $L_2$.}

\thinlines \drawundirectededge(C,D){} \drawundirectededge(D,E){}

\drawundirectededge(F,G){}\drawundirectededge(B,I){}

\drawundirectededge(H,I){}\drawundirectededge(I,A){}

\letvertex a=(240,10)\letvertex b=(260,44)

\letvertex c=(280,78)\letvertex d=(300,112)

\letvertex e=(320,78)\letvertex f=(340,44)

\letvertex g=(360,10)\letvertex h=(320,10)\letvertex i=(280,10)

\drawvertex(a){$\bullet$}\drawvertex(b){$\bullet$}

\drawvertex(c){$\bullet$}\drawvertex(d){$\bullet$}

\drawvertex(e){$\bullet$}\drawvertex(f){$\bullet$}

\drawvertex(g){$\bullet$}\drawvertex(h){$\bullet$}

\drawvertex(i){$\bullet$}

\thicklines  \drawundirectededge(i,a){}\drawundirectededge(h,f){}

\drawundirectededge(f,g){}

\drawundirectededge(h,i){}\drawundirectededge(d,e){}\drawundirectededge(c,d){}\drawundirectededge(e,c){}

\thinlines \drawundirectededge(b,i){}

\drawundirectededge(g,h){}\drawundirectededge(a,b){}\drawundirectededge(e,f){}\drawundirectededge(b,c){}
\end{picture}
\end{center}
\vspace{0,5 cm} Each closed polygon in $\Sigma_n$ can be obtained
in the following way: either it is a union of closed polygons
living in the three copies $\Sigma_{n-1}$ or it contains the three
special edges joining the three subgraphs isomorphic to
$\Sigma_{n-1}$. The subgraphs of the first type can be identified
with the elements of the set $P_{n-1}^3$, whereas the other ones
are obtained by joining three elements in $L_{n-1}$, each one
belonging to one of the three copies of $\Sigma_{n-1}$, so that
they can be identified with elements of the set $L_{n-1}^3$. This
gives
\begin{eqnarray}\label{ricorrenza per ising}
P_n=P_{n-1}^3 \coprod L_{n-1}^3.
\end{eqnarray}
On the other hand, each element in $L_n$ can be described in the
following way: if it contains a path that does not reach the
up-most triangle isomorphic to $\Sigma_{n-1}$, it can be regarded
as an element in $L_{n-1}^2\times P_{n-1}$; if it contains a path
which goes through all  three copies of $\Sigma_{n-1}$, then it is
in $L_{n-1}^3$. This gives
\begin{eqnarray}\label{ricorrenza per ising2}
L_{n}=\left(L_{n-1}^2 \times P_{n-1}\right) \coprod L_{n-1}^3,
\end{eqnarray}
from which we deduce
\begin{prop}
For each $n\geq 1$ the number $|P_n|$ of closed polygons in the
Schreier graph $\Sigma_n$ of $H^{(3)}$ is $2^{\frac{3^n-1}{2}}$.
\end{prop}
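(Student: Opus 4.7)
The plan is to combine the two disjoint-union recursions (\ref{ricorrenza per ising}) and (\ref{ricorrenza per ising2}) just established, which at the level of cardinalities read
\[
|P_n|=|P_{n-1}|^3+|L_{n-1}|^3,\qquad |L_n|=|L_{n-1}|^2|P_{n-1}|+|L_{n-1}|^3,
\]
together with an induction showing that $|P_n|=|L_n|$ for every $n\geq 1$; once this coincidence is in place, the first recursion collapses to $|P_n|=2|P_{n-1}|^3$, which is trivial to solve.

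The first step is the base case $n=1$. Since $\Sigma_1$ is a triangle (its three loops having been removed), direct inspection shows that the only closed polygons in $\Sigma_1$ are the empty subgraph and the full triangle, so $|P_1|=2$. Likewise, there are exactly two self-avoiding paths from the leftmost to the rightmost vertex of $\Sigma_1$ (the single connecting edge, and the length-two path through the top vertex), and in each case the only closed polygon edge-disjoint from the path is the empty one; hence $|L_1|=2$. In particular $|P_1|=|L_1|=2=2^{(3^1-1)/2}$.

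The second step is the inductive passage. Assume $|P_{n-1}|=|L_{n-1}|=:x$. Substituting into the two recursions yields
\[
|P_n|=x^3+x^3=2x^3,\qquad |L_n|=x^2\cdot x+x^3=2x^3,
\]
so $|P_n|=|L_n|$ as well, and the common value satisfies $|P_n|=2|P_{n-1}|^3$. By induction the identity $|P_n|=|L_n|$ holds for all $n\geq 1$.

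Finally, setting $x_n:=\log_2|P_n|$, the recurrence $|P_n|=2|P_{n-1}|^3$ becomes $x_n=1+3x_{n-1}$ with initial value $x_1=1$, whose explicit solution is $x_n=(3^n-1)/2$, giving $|P_n|=2^{(3^n-1)/2}$ as claimed. I do not anticipate a serious obstacle: the only point that requires a moment of care is verifying the base case $|L_1|=2$ (since the definition of $L_n$ mixes a path and an edge-disjoint polygon system), but this reduces to a one-line case analysis on the triangle, after which the induction is purely algebraic.
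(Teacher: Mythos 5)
Your proof is correct and takes essentially the route the paper intends: the proposition is presented there as an immediate consequence of the cardinality recursions $|P_n|=|P_{n-1}|^3+|L_{n-1}|^3$ and $|L_n|=|L_{n-1}|^2|P_{n-1}|+|L_{n-1}|^3$ with $|P_1|=|L_1|=2$, and your observation that $|P_n|=|L_n|$ propagates inductively (collapsing the system to $|P_n|=2|P_{n-1}|^3$) is precisely the detail the paper leaves implicit. The base-case count $|L_1|=2$ and the solution of the resulting recurrence are both verified correctly.
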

We are now ready to compute the generating series for closed
polygons and the partition function of the Ising model on Schreier
graphs of $H^{(3)}$. Denote by $\Gamma_n^{cl}(z)$ the generating
function of the set of subgraphs in $P_n$ and by $\Upsilon_n(z)$
the generating function of the set of subgraphs in $L_n$. The
equation \eqref{ricorrenza per ising} gives
\begin{eqnarray}\label{ricorrenza per funzione generatrice hanoi}
\Gamma_n^{cl}(z)=\left(\Gamma_{n-1}^{cl}(z)\right)^3+z^3\Upsilon_{n-1}^3(z).
\end{eqnarray}
The factor $z^3$ in \eqref{ricorrenza per funzione generatrice
hanoi} is explained by the fact that each term in
$\Upsilon_n^3(z)$ corresponds to a set of edges that becomes a
closed polygon after adding the three special edges connecting the
three copies of $\Sigma_{n-1}$.  We have consequently that the
second summand is the generating function for the closed polygons
containing the three special edges. Analogously, from
\eqref{ricorrenza per ising2} we have
\begin{eqnarray}\label{secondautile}
\Upsilon_n(z)=z\Upsilon_{n-1}^2(z)\Gamma_{n-1}^{cl}(z)+z^2\Upsilon_{n-1}^3(z).
\end{eqnarray}
\begin{teo}\label{fiisinghanoi}
For each $n\geq 1$, the partition function of the Ising model on
the Schreier graph $\Sigma_n$ of the group $H^{(3)}$ is
$$
Z_n=2^{3^n}\cdot\cosh(\beta
J)^\frac{3^{n+1}-3}{2}\cdot\Gamma_n^{cl}(\tanh(\beta J)),
$$
with
$$
\Gamma_n^{cl}(z)=z^{3^n}\prod_{k=1}^n\psi_k^{3^{n-k}}(z)\cdot(\psi_{n+1}(z)-1),
$$
where $\psi_1(z) = \frac{z+1}{z}$ and $\psi_k(z) =
\psi_{k-1}^2(z)-3\psi_{k-1}(z)+4$, for each $k\geq 2$.
\end{teo}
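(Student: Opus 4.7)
The first step handles the front factor. By the high-temperature expansion of Section 1.1, with constant coupling $J$ we have $Z_n=2^{|Vert(\Sigma_n)|}\cdot\cosh(\beta J)^{|Edges(\Sigma_n)|}\cdot\Gamma_n^{cl}(\tanh(\beta J))$, so I only need the two counts. From the substitutional rules, $\Sigma_n$ has $3^n$ vertices, and its non-loop edges satisfy $e_n=3e_{n-1}+3$ with $e_1=3$, yielding $e_n=(3^{n+1}-3)/2$ as required.

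The bulk of the argument is to solve the coupled system
\begin{align*}
\Gamma_n^{cl} &= (\Gamma_{n-1}^{cl})^3 + z^3\Upsilon_{n-1}^3,\\
\Upsilon_n &= z\Upsilon_{n-1}^2\Gamma_{n-1}^{cl} + z^2\Upsilon_{n-1}^3,
\end{align*}
already obtained as \eqref{ricorrenza per funzione generatrice hanoi} and \eqref{secondautile}. The key observation will be to introduce the ratio $t_n:=\Gamma_n^{cl}(z)/(z\,\Upsilon_n(z))$. Using the factorization $a^3+b^3=(a+b)(a^2-ab+b^2)$ in the first recursion and $\Gamma_{n-1}^{cl}+z\Upsilon_{n-1}=z\Upsilon_{n-1}(t_{n-1}+1)$ in the second, I will check by direct computation that $t_n$ satisfies the single quadratic recurrence $t_n=t_{n-1}^2-t_{n-1}+1$, with initial value $t_0=1/z$ (coming from $\Gamma_0^{cl}=\Upsilon_0=1$ on the trivial graph $\Sigma_0$; one can equivalently verify the base case $n=1$ directly from $\Gamma_1^{cl}=1+z^3$ and $\Upsilon_1=z+z^2$). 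The affine shift $\psi_k:=t_{k-1}+1$ then turns this into exactly $\psi_k=\psi_{k-1}^2-3\psi_{k-1}+4$ with $\psi_1=(z+1)/z$, matching the sequence in the statement.

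Once this one-variable reduction is in hand, the remainder is a bookkeeping exercise. The factorization $\Gamma_{n-1}^{cl}+z\Upsilon_{n-1}=z\Upsilon_{n-1}\psi_n$ collapses the second recursion to $\Upsilon_n=z^2\Upsilon_{n-1}^3\,\psi_n$, which iterates immediately (since $\Upsilon_0=1$) to the closed form $\Upsilon_n=z^{3^n-1}\prod_{k=1}^n\psi_k^{3^{n-k}}$. On the other hand, from the identity $t_{n-1}^3+1=(t_{n-1}+1)(t_{n-1}^2-t_{n-1}+1)=\psi_n(\psi_{n+1}-1)$, the first recursion rewrites as $\Gamma_n^{cl}=z^3\Upsilon_{n-1}^3\,\psi_n(\psi_{n+1}-1)$; substituting the expression for $\Upsilon_{n-1}$ and collecting the power of $z$ via $3+3(3^{n-1}-1)=3^n$ yields precisely the announced product formula for $\Gamma_n^{cl}(z)$, and hence the stated expression for $Z_n$.

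The main obstacle is spotting the correct reduction. Neither recursion is separately tractable, and the cubic coupling in both unknowns looks forbidding; but the ratio $\Gamma_n^{cl}/(z\Upsilon_n)$ satisfies a scalar quadratic recurrence, and the affine shift $\psi=t+1$ is essentially forced by the requirement that the recurrence take the form $\psi^2-3\psi+4$. After that identification is made, every remaining step is an elementary substitution together with a straightforward induction on $n$.
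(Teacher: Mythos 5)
Your argument is correct, and it reaches the stated formulas by a genuinely different route than the paper. The paper's proof simply guesses the closed forms for both $\Gamma_n^{cl}$ and $\Upsilon_n$ and verifies them simultaneously by induction, using the identity $\psi_{n+1}^3-3\psi_{n+1}^2+3\psi_{n+1}=\psi_{n+1}(\psi_{n+2}-1)$ in the inductive step; it never explains where the sequence $\psi_k$ comes from. You instead \emph{derive} the answer: the substitution $t_n=\Gamma_n^{cl}/(z\Upsilon_n)$ together with the factorization $a^3+b^3=(a+b)(a^2-ab+b^2)$ decouples the system \eqref{ricorrenza per funzione generatrice hanoi}--\eqref{secondautile} into the scalar recurrence $t_n=t_{n-1}^2-t_{n-1}+1$, and the shift $\psi_k=t_{k-1}+1$ produces exactly $\psi_k=\psi_{k-1}^2-3\psi_{k-1}+4$ with $\psi_1=(z+1)/z$. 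I checked the key computations: the cancellation of the common factor $\Gamma_{n-1}^{cl}+z\Upsilon_{n-1}$ in $t_n$ does give the quadratic recurrence; the convention $\Gamma_0^{cl}=\Upsilon_0=1$ is consistent with the initial data $\Gamma_1^{cl}=1+z^3$, $\Upsilon_1=z+z^2$; the collapsed recursion $\Upsilon_n=z^2\Upsilon_{n-1}^3\psi_n$ iterates to $z^{3^n-1}\prod_{k=1}^n\psi_k^{3^{n-k}}$ (the exponent satisfies $e_n=3e_{n-1}+2$, $e_0=0$); and $t_{n-1}^3+1=\psi_n(\psi_{n+1}-1)$ gives the extra factor in $\Gamma_n^{cl}$ with the power of $z$ coming out to $3+3(3^{n-1}-1)=3^n$. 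The edge count $e_n=3e_{n-1}+3$ with $e_1=3$ also checks out. What your approach buys is motivation: it explains why the answer has the shape of a product of iterates of a single quadratic map, and it would let one find the formula without knowing it in advance. What the paper's approach buys is brevity, and it transfers verbatim to the Sierpi\'nski case of Theorem \ref{teosierpinskigenfunct} where the same verification is repeated. The one cosmetic caveat is that your write-up defers the verification of $t_n=t_{n-1}^2-t_{n-1}+1$ (``I will check by direct computation''); that computation should be displayed in a final version, but it is a two-line cancellation and poses no difficulty.
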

\begin{proof}
Recall that $Z_n = 2^{|Vert(\Sigma_n)|}\cosh(\beta
J)^{|Edges(\Sigma_n)|}\cdot\Gamma_n^{cl}(\tanh(\beta J))$, where
$\Gamma_n^{cl}(z)$ is the generating function of closed polygons
in $\Sigma_n$. In our case we have $|Edges(\Sigma_n)|\\ =
\frac{3^{n+1}-3}{2}$ and $|Vert(\Sigma_n)|= 3^n$.

We know that the generating functions $\Gamma_n^{cl}(z)$ and
$\Upsilon_n(z)$ satisfy equations \eqref{ricorrenza per funzione
generatrice hanoi} and \eqref{secondautile}, and the initial
conditions can be easily computed as:
$$
\Gamma_1^{cl}(z)=1+z^3 \qquad \Upsilon_1(z)=z^2+z.
$$
We now show by induction on $n$ that the solutions of the system
of equations \eqref{ricorrenza per funzione generatrice hanoi} and
\eqref{secondautile} are
$$
\begin{cases}
\Gamma_n^{cl}(z)=z^{3^n}\prod_{k=1}^n\psi_k^{3^{n-k}}(z)\cdot(\psi_{n+1}(z)-1)\\
\Upsilon_n(z)=z^{3^n-1}\prod_{k=1}^n\psi_k^{3^{n-k}}(z).
\end{cases}
$$
For $n=1$, we get
$\Gamma_1^{cl}(z)=z^3\psi_1(z)(\psi_2(z)-1)=z^3+1$ and
$\Upsilon_1(z)= z^2\psi_1(z)=z^2+z$ and so the claim is true. Now
suppose that the assertion is true for $n$ and let us show that it
is true for $n+1$. One gets:
\begin{eqnarray*}
\Gamma_{n+1}^{cl}(z) &=&
\left(z^{3^n}\prod_{k=1}^n\psi_k^{3^{n-k}}(z)\cdot(\psi_{n+1}(z)-1)\right)^3+z^3\left(z^{3^n-1}\prod_{k=1}^n\psi_k^{3^{n-k}}(z)\right)^3\\
&=&z^{3^{n+1}}\prod_{k=1}^n\psi_k^{3^{n-k+1}}(z)\left(\psi_{n+1}^3(z)-3\psi_{n+1}^2(z)+3\psi_{n+1}(z)\right)\\
&=&z^{3^{n+1}}\prod_{k=1}^{n+1}\psi_k^{3^{n-k+1}}(z)(\psi_{n+2}(z)-1)
\end{eqnarray*}
and
\begin{eqnarray*}
\Upsilon_{n+1}(z) &=&
z\left(z^{3^n-1}\prod_{k=1}^n\psi_k^{3^{n-k}}(z)\right)^2\left(z^{3^n}\prod_{k=1}^n\psi_k^{3^{n-k}}(z)\cdot(\psi_{n+1}(z)-1)\right)\\
&+&z^2\left(z^{3^n-1}\prod_{k=1}^n\psi_k^{3^{n-k}}(z)\right)^3\\
&=& z^{3^{n+1}-1}\prod_{k=1}^n\psi_k^{3^{n-k+1}}(z)\cdot \psi_{n+1}(z)\\
&=& z^{3^{n+1}-1}\prod_{k=1}^{n+1}\psi_k^{3^{n-k+1}}(z).
\end{eqnarray*}
\end{proof}
\begin{teo}\label{EXISTENCEhanoi}
The thermodynamic limit $ \lim_{n\to
\infty}\frac{\log(Z_n)}{|Vert(\Sigma_n)|}$ exists.
\end{teo}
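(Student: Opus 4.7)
The plan is to leverage the closed form for $Z_n$ given in Theorem \ref{fiisinghanoi} and reduce the existence of the thermodynamic limit to the convergence of a single explicit series. Since $|Vert(\Sigma_n)| = 3^n$, we have
$$
\frac{\log Z_n}{3^n} = \log 2 \;+\; \frac{3^{n+1}-3}{2 \cdot 3^n}\log\cosh(\beta J) \;+\; \frac{\log \Gamma_n^{cl}(z)}{3^n},
$$
with $z = \tanh(\beta J) \in (0,1)$. The first term is constant and the second tends to $\tfrac{3}{2}\log\cosh(\beta J)$, so the task reduces to analysing $\log \Gamma_n^{cl}(z)/3^n$. Taking logarithms in the factorisation from Theorem \ref{fiisinghanoi} yields
$$
\frac{\log \Gamma_n^{cl}(z)}{3^n} = \log z \;+\; \sum_{k=1}^n \frac{\log \psi_k(z)}{3^k} \;+\; \frac{\log(\psi_{n+1}(z)-1)}{3^n},
$$
so everything comes down to controlling the growth of the iterates $\psi_k(z)$.

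The main step, and the only one that needs any real argument, is a quadratic growth bound for $\psi_k(z)$. Using the recursion $\psi_{k+1} = \psi_k^2 - 3\psi_k + 4$ together with the identity $\psi_{k+1} - \psi_k = (\psi_k - 2)^2 \geq 0$, the sequence $(\psi_k(z))_{k \geq 1}$ is non-decreasing. Since $\psi_1(z) = 1 + 1/z > 2$ for $z \in (0,1)$, one obtains $\psi_k(z) \geq 2$ for every $k$, and this in turn forces $\psi_{k+1}(z) \leq \psi_k(z)^2$ (because $-3\psi_k + 4 \leq 0$ whenever $\psi_k \geq 4/3$). Induction on $k$ then gives $\log \psi_k(z) \leq 2^{k-1}\log \psi_1(z)$.

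Inserting this bound into the series, and using that all $\log \psi_k(z) \geq 0$, we get
$$
\sum_{k=1}^\infty \frac{\log \psi_k(z)}{3^k} \leq \log \psi_1(z) \sum_{k=1}^\infty \frac{2^{k-1}}{3^k} = \log \psi_1(z) < \infty,
$$
so the series converges by monotone convergence. The same estimate shows $\log(\psi_{n+1}(z) - 1) \leq \log \psi_{n+1}(z) \leq 2^n\log \psi_1(z)$, whence the remainder satisfies $\log(\psi_{n+1}(z) - 1)/3^n = O((2/3)^n) \to 0$. Summing the three contributions gives an explicit closed form for the thermodynamic limit, completing the proof. The only case to watch is $\beta J = 0$, which is trivial since then $Z_n = 2^{3^n}$ and $\log Z_n/3^n = \log 2$ outright.
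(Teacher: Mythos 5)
Your proof is correct, and while the overall reduction is the same as the paper's (peel off $\log 2$ and $\frac32\log\cosh(\beta J)$, then show that $\frac{1}{3^n}\log\Gamma_n^{cl}(z)$ converges by splitting it into a series indexed by $k$ plus a vanishing remainder), the key estimate is obtained by a genuinely different and cleaner route. The paper clears denominators, writing $\psi_k(z)=\varphi_k(z)/z^{2^{k-1}}$ with $\varphi_k$ a polynomial of degree $2^{k-1}$, and then needs a two-sided bound $2z^{2^{k-1}}\le\varphi_k(z)\le 2^{2^k-1}$ to control $|\log\varphi_k(z)|$; the lower bound is justified by an induction on the location of the first minimum $c_k$ of $\varphi_k$, which is stated rather than carried out and is the delicate point of their argument. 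You instead work directly with the iterates $\psi_k$ and exploit the dynamics of $\psi\mapsto\psi^2-3\psi+4$ above its fixed point: the identity $\psi_{k+1}-\psi_k=(\psi_k-2)^2\ge 0$ together with $\psi_1(z)=1+1/z>2$ gives $\psi_k\ge 2$, hence $\log\psi_k>0$ (so no lower bound on $|\log\psi_k|$ is ever needed), and $\psi_{k+1}\le\psi_k^2$ gives $\log\psi_k\le 2^{k-1}\log\psi_1(z)$, after which the series $\sum_k 3^{-k}\log\psi_k$ and the tail $3^{-n}\log(\psi_{n+1}-1)$ are controlled at once. The constant $\log z$ you extract is exactly what reconciles your decomposition with the paper's ($\log\varphi_k=\log\psi_k+2^{k-1}\log z$ and $\sum_k 2^{k-1}3^{-k}=1$), so the two limits agree. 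Your version buys a fully self-contained estimate and, as a bonus, an explicit closed form $\log 2+\frac32\log\cosh(\beta J)+\log z+\sum_{k\ge1}3^{-k}\log\psi_k(z)$ for the limit; the paper's version keeps everything in terms of honest polynomials on $[0,1]$, which is convenient if one also wants uniform bounds down to $z=0$.
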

\begin{proof}
Since $|Edges(\Sigma_n)| = \frac{3^{n+1}-3}{2}$ and
$|Vert(\Sigma_n)|= 3^n$, the limit reduces to
$$
\log(2) + \frac{3}{2}\log(\cosh(\beta J)) +
\lim_{n\to\infty}\frac{\log(\Gamma_n^{cl}(z))}{3^n},
$$
where $\tanh (\beta J)$ takes values between 0 and 1. It is
straightforward to show, by induction, that
$\psi_k(z)=\frac{\varphi_k(z)}{z^{2^{k-1}}}$, for every $k\geq 1$,
where $\varphi_k(z)$ is a polynomial of degree $2^{k-1}$ in $z$.
Hence, the limit
$\lim_{n\to\infty}\frac{\log(\Gamma_n^{cl}(z))}{3^n}$ becomes
$$
\lim_{n\to \infty}\frac{\log
\left(\prod_{k=1}^n\varphi_k^{3^{n-k}}(z)\cdot
\left(\varphi_{n+1}(z)-z^{2^n}\right)\right)}{3^n} =
$$
$$
\lim_{n\to \infty}\sum_{k=1}^n\frac{\log(\varphi_k(z))}{3^k} +
\lim_{n\to \infty}\frac{\log(\varphi_{n+1}(z)-z^{2^n})}{3^n}.
$$
Let us show that the series
$\sum_{k=1}^{\infty}\frac{\log(\varphi_k(z))}{3^k}$ converges
absolutely, and that\\ $\lim_{n\to
\infty}\frac{\log(\varphi_{n+1}(z)-z^{2^n})}{3^n}=0$. It is not
difficult to show by induction that
$$
2z^{2^{k-1}}\leq \varphi_k(z)\leq 2^{2^k-1}
$$
for each $k\geq 2$ and $z\in [0,1]$, so that
$$
|\log(\varphi_k(z))|\leq \max\{2^{k-1}|\log(z)|,
(2^k-1)|\log(2)|\}.
$$
Note that $1\leq \varphi_1(z)\leq 2$ for each $0\leq z \leq 1$.
Moreover, one can directly verify that $\varphi_2(z)$ has a
minimum at $c_2=1/4$ and $\varphi_2'(z)<0$ for each $z\in
(0,c_2)$. Let us call $c_k$ the point where $\varphi_k(z)$ has the
first minimum. One can prove by induction that
$\varphi'_{k+1}(z)<0$ for each $z\in (0,c_k]$ and so $c_k<c_{k+1}$
for every $k\geq 2$. In particular, $\varphi_k(z)$ satisfies
$$
-2^k\log (2) \leq \log(\varphi_k(z)) \leq (2^k-1)\log (2),
$$
that gives $|\log(\varphi_k(z))|\leq 2^k\log(2)$ for each $k\geq
2$. So we can conclude that
$$
\sum_{k=1}^{\infty} \frac{|\log(\varphi_k(z))|}{3^k} \leq
\frac{\log(2)}{3} + \sum_{k=2}^{\infty}\frac{2^k\log(2)}{3^k}<
\infty.
$$
Moreover $\lim_{n\to
\infty}\frac{|\log(\varphi_{n+1}(z)-z^{2^n})|}{3^n}\leq \lim_{n\to
\infty}\frac{2^n\log(2)}{3^n}=0$.
\end{proof}

\subsection{The Sierpi\'{n}ski gasket}\label{chapterSierpinski}

In this section we use the high temperature expansion and counting
of closed polygons in order to compute the partition function for
the Ising model on a sequence of graphs $\{\Omega_n\}_{n\geq 1}$
converging to the Sierpi\'{n}ski gasket. The graphs $\Omega_n$ are
close relatives the Schreier graphs $\Sigma_n$ of the group
$H^{(3)}$ considered above. More precisely, one can obtain
$\Omega_n$ from $\Sigma_n$ by contracting the edges between copies
of $\Sigma_{n-1}$ in $\Sigma_n$. The graphs $\Omega_n$ are also
self-similar in the sense of \cite{wagner2}, as can be seen in the
picture.
\begin{center}
\begin{picture}(400,105)
\put(55,30){$\Omega_1$}\put(195,30){$\Omega_n$}

\letvertex A=(100,60)\letvertex B=(70,10)\letvertex C=(130,10)

\letvertex D=(270,110)\letvertex E=(240,60)\letvertex F=(210,10)\letvertex G=(270,10)

\letvertex H=(330,10)\letvertex I=(300,60)

\put(260,70){$\Omega_{n-1}$}\put(230,20){$\Omega_{n-1}$}\put(290,20){$\Omega_{n-1}$}

\drawvertex(A){$\bullet$}\drawvertex(B){$\bullet$}

\drawvertex(C){$\bullet$}\drawvertex(D){$\bullet$}

\drawvertex(E){$\bullet$}\drawvertex(F){$\bullet$}

\drawvertex(G){$\bullet$}\drawvertex(H){$\bullet$}

\drawvertex(I){$\bullet$}

\drawundirectededge(B,A){} \drawundirectededge(C,B){}

\drawundirectededge(A,C){} \drawundirectededge(E,D){}

\drawundirectededge(F,E){} \drawundirectededge(G,F){}

\drawundirectededge(H,G){} \drawundirectededge(I,H){}

\drawundirectededge(D,I){} \drawundirectededge(I,E){}

\drawundirectededge(E,G){} \drawundirectededge(G,I){}
\end{picture}
\end{center}
Similarly to the case of $H^{(3)}$ above, define sets $P_n$ and
$L_n$. The same recursive rules hold, and  the total number of
closed polygons is again $2^{\frac{3^n-1}{2}}$, since the initial conditions are the same.\\
\indent Let $\Gamma_n^{cl}(z)$ denote the generating function of
the subgraphs in $P_n$ and let $\Upsilon_n(z)$ denote the
generating function of the subgraphs in $L_n$. From relations
\eqref{ricorrenza per ising} and \eqref{ricorrenza per ising2} we
deduce the following formulae:
\begin{eqnarray}\label{ricorrenza per funzione generatrice}
\Gamma_n^{cl}(z)=\left(\Gamma_{n-1}^{cl}(z)\right)^3+\Upsilon_{n-1}^3(z).
\end{eqnarray}
and
\begin{eqnarray}\label{ricoriconuova}
\Upsilon_n(z)=\Upsilon_{n-1}^2(z)\Gamma_{n-1}^{cl}(z)+\Upsilon_{n-1}^3(z).
\end{eqnarray}
Note that in \eqref{ricorrenza per funzione generatrice} and
\eqref{ricoriconuova} there are no factors $z,z^2,z^3$ occurring
in \eqref{ricorrenza per funzione generatrice hanoi} and
\eqref{secondautile}, because the special edges connecting
elementary triangles have been contracted in $\Omega_n$.
\begin{teo}\label{teosierpinskigenfunct}
For each $n\geq 1$, the partition function of the Ising model on
the $n$-th Sierpi\'nski graph $\Omega_n$ is
$$
Z_n= 2^{\frac{3^n+3}{2}}\cdot\cosh(\beta
J)^{3^n}\cdot\Gamma_n^{cl}(\tanh(\beta J)),
$$
with
$$
\Gamma_n^{cl}(z)=z^{\frac{3^n}{2}}\prod_{k=1}^n\psi_k^{3^{n-k}}(z)\cdot(\psi_{n+1}(z)-1),
$$
where $\psi_1(z) = \frac{z+1}{z^{1/2}}$,
$\psi_2(z)=\frac{z^2+1}{z}$ and
$\psi_k(z)=\psi_{k-1}^2(z)-3\psi_{k-1}(z)+4$, for each $k\geq 3$.
\end{teo}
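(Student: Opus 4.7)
The plan is to parallel Theorem \ref{fiisinghanoi} closely, adjusting the combinatorial input for the fact that $\Omega_n$ is built from three copies of $\Omega_{n-1}$ identified at three vertices rather than joined by three extra edges. First I would compute $|Vert(\Omega_n)|$ and $|Edges(\Omega_n)|$. Vertex identification gives the recurrence $v_n = 3v_{n-1} - 3$ with $v_1 = 3$, hence $v_n = \frac{3^n+3}{2}$; no edges are identified, so $e_n = 3e_{n-1}$ with $e_1 = 3$, hence $e_n = 3^n$. These are precisely the exponents needed to produce the prefactor $2^{(3^n+3)/2}\cosh(\beta J)^{3^n}$ from the high-temperature expansion \eqref{polygons}.

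The base case is direct inspection of the triangle $\Omega_1$: the only closed polygons are the empty subgraph and the full triangle, so $\Gamma_1^{cl}(z) = 1+z^3$; the only self-avoiding paths between the designated left and right vertices are the direct edge and the two-edge detour through the apex, with no room to add disjoint closed polygons, so $\Upsilon_1(z) = z+z^2$. A brief algebraic check confirms that the formulas in the statement specialize correctly at $n=1$, namely $z^{3/2}\psi_1(z)(\psi_2(z)-1) = (z+1)(z^2-z+1) = 1+z^3$.

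Next I proceed by induction using the recursions \eqref{ricorrenza per funzione generatrice} and \eqref{ricoriconuova} — note that the $z$-weights appearing in the corresponding Hanoi relations \eqref{ricorrenza per funzione generatrice hanoi}, \eqref{secondautile} are now absent because the joining edges have been contracted. The key algebraic trick is to introduce $\rho_n(z) := \Gamma_n^{cl}(z)/\Upsilon_n(z)$. Dividing both recursions by $\Upsilon_{n-1}^3$ and using the factorization $\rho_{n-1}^3 + 1 = (\rho_{n-1}+1)(\rho_{n-1}^2 - \rho_{n-1} + 1)$ decouples the system into
\[ \rho_n = \rho_{n-1}^2 - \rho_{n-1} + 1, \qquad \Upsilon_n = (\rho_{n-1}+1)\,\Upsilon_{n-1}^3. \]
Setting $\psi_{n+1}(z) := \rho_n(z)+1$ transforms the first relation into $\psi_{n+1} = \psi_n^2 - 3\psi_n + 4$, matching the statement. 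The initial value $\rho_1(z) = (1+z^3)/(z+z^2) = (z^2-z+1)/z$ gives $\psi_2(z) = (z^2+1)/z$, as required. The second relation telescopes to $\Upsilon_n = \Upsilon_1^{3^{n-1}} \prod_{k=2}^n \psi_k^{3^{n-k}}$, and the ad hoc choice $\psi_1(z) = (z+1)/z^{1/2}$ is engineered precisely so that $\Upsilon_1 = z+z^2$ is rewritten as $z^{3/2}\psi_1(z)$, producing the uniform product formula $\Upsilon_n(z) = z^{3^n/2}\prod_{k=1}^n \psi_k^{3^{n-k}}(z)$. Multiplying by $\psi_{n+1}-1$ gives the claimed formula for $\Gamma_n^{cl}$.

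The main obstacle I anticipate is purely the half-integer exponent bookkeeping: one must verify that the $z^{3^n/2}$ prefactor and the $z^{-1/2}$ inside $\psi_1$ conspire so that $\Gamma_n^{cl}$ really is a polynomial in $z$. This follows because $\psi_1^{3^{n-1}}$ contributes $z^{-3^{n-1}/2}$, and $3^n/2 - 3^{n-1}/2 = 3^{n-1}$ is an integer, so that only integer $z$-powers survive in the final product. Checking that this compatibility is preserved by the inductive step $\Upsilon_n = \psi_n\Upsilon_{n-1}^3$ (which cubes $3^{n-1}/2$ to $3\cdot 3^{n-1}/2 = 3^n/2$ as desired) closes the argument.
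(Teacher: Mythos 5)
Your proposal is correct and rests on the same foundations as the paper's proof: the counts $|Vert(\Omega_n)|=\frac{3^n+3}{2}$ and $|Edges(\Omega_n)|=3^n$, the recursions \eqref{ricorrenza per funzione generatrice} and \eqref{ricoriconuova} with initial data $\Gamma_1^{cl}=1+z^3$ and $\Upsilon_1=z+z^2$, and an induction on $n$. The one genuine difference is in how the inductive step is organized. The paper states the closed formulas for both $\Gamma_n^{cl}$ and $\Upsilon_n$ and verifies them by direct substitution, the key identity being $(\psi_{n+1}-1)^3+1=\psi_{n+1}(\psi_{n+2}-1)$; where the recursion $\psi_k=\psi_{k-1}^2-3\psi_{k-1}+4$ comes from is left unexplained. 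You instead derive it: passing to the ratio $\rho_n=\Gamma_n^{cl}/\Upsilon_n$ decouples the system into $\rho_n=\rho_{n-1}^2-\rho_{n-1}+1$ and $\Upsilon_n=(\rho_{n-1}+1)\Upsilon_{n-1}^3$, and the substitution $\psi_{n+1}=\rho_n+1$ turns the first relation into the stated quadratic recursion while the second telescopes to the product formula. This buys a conceptual explanation of the ansatz --- in particular of why the seemingly artificial $\psi_1(z)=(z+1)/z^{1/2}$ is exactly the normalization making $\Upsilon_1=z^{3/2}\psi_1(z)$ --- at no extra computational cost; the two arguments are otherwise equivalent. Your closing remark verifying that the half-integer powers of $z$ cancel so that $\Gamma_n^{cl}$ is a genuine polynomial is a worthwhile sanity check that the paper omits.
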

\begin{proof}
Again we shall use the expression $Z_n =
2^{|Vert(\Omega_n)|}\cosh(\beta
J)^{|Edges(\Omega_n)|}\cdot\Gamma_n^{cl}(\tanh(\beta J))$, where
$\Gamma_n^{cl}(z)$ is the generating function of closed polygons
in $\Omega_n$. In our case we have $|Edges(\Omega_n)| = 3^n$ and
$|Vert(\Omega_n)|= \frac{3^n+3}{2}$.

We know that the generating functions $\Gamma_n^{cl}(z)$ and
$\Upsilon_n(z)$ satisfy the equations \eqref{ricorrenza per
funzione generatrice} and \eqref{ricoriconuova}, with the initial
conditions
$$
\Gamma_1^{cl}(z)=1+z^3 \qquad \Upsilon_1(z)=z^2+z.
$$
Let us show by induction on $n$ that the solutions of the system
of equations \eqref{ricorrenza per funzione generatrice} and
\eqref{ricoriconuova} are
$$
\begin{cases}
\Gamma_n^{cl}(z)=z^{\frac{3^n}{2}}\prod_{k=1}^n\psi_k^{3^{n-k}}(z)\cdot
(\psi_{n+1}(z)-1)\\
\Upsilon_n(z)=z^{\frac{3^n}{2}}\prod_{k=1}^n\psi_k^{3^{n-k}}(z).
\end{cases}
$$
For $n=1$, we get
$\Gamma_1^{cl}(z)=z^{\frac{3}{2}}\psi_1(z)(\psi_2(z)-1)=z^3+1$ and
$\Upsilon_1(z)= z^{\frac{3}{2}}\psi_1(z)=z^2+z$ and so the claim
is true. Now suppose that the assertion is true for $n$ and let us
show that it is true for $n+1$. One gets:
\begin{eqnarray*}
\Gamma_{n+1}^{cl}(z) &=&
\left(z^{\frac{3^n}{2}}\prod_{k=1}^n\psi_k^{3^{n-k}}(z)\cdot(\psi_{n+1}(z)-1)\right)^3+\left(z^{\frac{3^n}{2}}\prod_{k=1}^n\psi_k^{3^{n-k}}(z)\right)^3\\
&=&z^{\frac{3^{n+1}}{2}}\prod_{k=1}^n\psi_k^{3^{n-k+1}}(z)\left(\psi_{n+1}^3(z)-3\psi_{n+1}^2(z)+3\psi_{n+1}(z)\right)\\
&=&z^{\frac{3^{n+1}}{2}}\prod_{k=1}^{n+1}\psi_k^{3^{n-k+1}}(z)\cdot\left(\psi_{n+2}(z)-1\right)
\end{eqnarray*}
and
\begin{eqnarray*}
\Upsilon_{n+1}(z)
&=&\left(z^{\frac{3^n}{2}}\prod_{k=1}^n\psi_k^{3^{n-k}}(z)\right)^2\left(z^{\frac{3^n}{2}}\prod_{k=1}^n\psi_k^{3^{n-k}}(z)\cdot
(\psi_{n+1}(z)-1)\right)\\
&+&\left(z^{\frac{3^n}{2}}\prod_{k=1}^n\psi_k^{3^{n-k}}(z)\right)^3\\
&=& z^{\frac{3^{n+1}}{2}}\prod_{k=1}^n\psi_k^{3^{n-k+1}}(z)\cdot \psi_{n+1}(z)\\
&=&z^{\frac{3^{n+1}}{2}}\prod_{k=1}^{n+1}\psi_k^{3^{n-k+1}}(z).
\end{eqnarray*}
\end{proof}
\begin{os}\rm
The existence of the thermodynamic limit can be shown in exactly
the same way as for the Schreier graphs of the Hanoi Towers group.
\end{os}

\subsection{Renormalization approach}
Expressions for the partition function of the Ising model on the
Sierpi\'nski gasket are well known to physicists. A
renormalization equation for it can be found for example in
\cite{Mand} (see also references therein), and a more detailed
analysis is given in \cite{burioni}. Using renormalization,
Burioni et al \cite{burioni} give the following recursion for the
partition function of the Ising model on the graphs $\Omega_n$,
$n\geq 1$:
\begin{eqnarray}\label{burioni2}
Z_{n+1}(y)=Z_n(f(y))[c(y)]^{3^{n-1}},
\end{eqnarray}
where $y=\exp(\beta J)$; $f(y)$ is a substitution defined by $$y
\rightarrow f(y)=\left(\frac{y^8-y^4+4}{y^4+3}\right)^{1/4};$$ and
$$
c(y)=\frac{y^4+1}{y^3} [(y^4+3)^3(y^8-y^4+4)]^{1/4};
$$
with
$$
Z_1(y)=2y^3+6y^{-1}\ .
$$
A similar computation can be performed in the case of the Schreier
graph $\Sigma_n$ of the group $H^{(3)}$, where the self-similarity
of the graph allows to compare the partition function $Z_1$ of the
first level with the partition function of level 2, where the sum
is taken only over the internal spins
$\sigma_2,\sigma_3,\sigma_5,\sigma_6,\sigma_8,\sigma_9$ (see
figure below). The resulting recurrence is the same as
\eqref{burioni2}, but with
$$
y \rightarrow
f(y)=\left(\frac{y^8-2y^6+2y^4+2y^2+1}{2(y^4+1)}\right)^{1/4}
$$
and
$$
c(y)=\frac{(y^4-y^2+2)(y^2+1)^3}{y^6}(8(y^4+1)^3(y^8-2y^6+2y^4+2y^2+1))^{1/4}\ .
$$
\unitlength=0,2mm
\begin{center}
\begin{picture}(400,115)
\letvertex A=(120,10)\letvertex B=(140,44)

\letvertex C=(160,78)\letvertex D=(180,112)

\letvertex E=(200,78)\letvertex F=(220,44)

\letvertex G=(240,10)\letvertex H=(200,10)\letvertex I=(160,10)

\put(76,60){$\Sigma_2$}

\put(177,118){$\sigma_1$}

\put(136,75){$\sigma_2$}\put(115,43){$\sigma_3$}

\put(114,-6){$\sigma_4$}\put(154,-6){$\sigma_5$}

\put(194,-6){$\sigma_6$}\put(234,-6){$\sigma_7$}

\put(225,43){$\sigma_8$}\put(205,75){$\sigma_9$}

\drawvertex(A){$\bullet$}\drawvertex(B){$\bullet$}

\drawvertex(C){$\bullet$}\drawvertex(D){$\bullet$}

\drawvertex(E){$\bullet$}\drawvertex(F){$\bullet$}

\drawvertex(G){$\bullet$}\drawvertex(H){$\bullet$}

\drawvertex(I){$\bullet$}

\drawundirectededge(A,B){}\drawundirectededge(B,C){}\drawundirectededge(C,D){}

\drawundirectededge(D,E){}\drawundirectededge(E,C){}\drawundirectededge(E,F){}\drawundirectededge(F,G){}

\drawundirectededge(B,I){}\drawundirectededge(H,F){}\drawundirectededge(H,I){}

\drawundirectededge(I,A){}\drawundirectededge(G,H){}
\end{picture}
\end{center}
\begin{os}\rm The above recursions for the partition functions for $\Omega_n$'s
and $\Sigma_n$'s can be deduced from our Theorems
\ref{fiisinghanoi} and \ref{teosierpinskigenfunct} by rewriting
the formulae in the variable $y=\exp(\beta J)$ and substituting
$z=\tanh(\beta J)=(y^2-1)/(y^2+1)$.
\end{os}


\section{Statistics on weighted closed polygons}\label{chapterstatitics}

This Section is devoted to the study of weighted generating
functions of closed polygons, i.e., we allow the edges of the
graph to have different weights $\tanh(\beta J_{i,j})$, as in RHS
of \eqref{polygons}. We also take into account the fact that the
graphs we consider are Schreier graphs of some self-similar group
$G$ with respect to a certain generating set $S$, and their edges
are therefore labeled by these generators. It is thus natural to
allow the situations where the energy between two neighbouring
spins takes a finite number of possible values encoded by the
generators $S$. Logarithmic derivatives of the weighted generating
function with respect to $s\in S$  give us the mean density of
$s$-edges in a random configuration. We can further find the
variance and show that the limiting distribution is normal.

\subsection{The Schreier graphs of the Grigorchuk's group}\label{statisticsweightedgrig}

Recall from \ref{gri} that the simple edges in $\Sigma_n$ are
always labeled by $a$. Moreover, the $2$-cycles can be labeled by
the couples of labels $(b,c)$, $(b,d)$ and $(c,d)$.  We want to
compute the weighted generating function of closed polygons, with
respect to the weights given by the labels $a,b,c,d$. Let us set,
for each $n\geq 1$:
$$
X_n=|\{2\textrm{-cycles with labels }b,c\}| \ \ \ \
Y_n=|\{2\textrm{-cycles with labels }b,d\}|
$$
$$
W_n=|\{2\textrm{-cycles with labels }c,d\}|
$$
One can easily check by using self-similar formulae for the
generators, that the following equations hold:
$$
\begin{cases}
X_n = W_{n-1}+2^{n-2}\\
Y_n = X_{n-1}\\
W_n = Y_{n-1}.
\end{cases}
$$
In particular, one gets
$$
\begin{cases}
X_n = X_{n-3}+2^{n-2}\\
Y_n = X_{n-1}\\
W_n = X_{n-2},
\end{cases}
$$
with initial conditions $X_1=0$, $X_2=1$ and $X_3=2$. One gets the
following values:
$$
X_n=
\begin{cases}
\frac{2^{n+1}-2}{7}&\text{if } n\equiv 0(3)\\
\frac{2^{n+1}-4}{7}&\text{if } n\equiv 1(3)\\
\frac{2^{n+1}-1}{7}&\text{if } n\equiv 2(3)
\end{cases} \qquad
Y_n=
\begin{cases}
\frac{2^{n}-1}{7}&\text{if } n\equiv 0(3)\\
\frac{2^{n}-2}{7}&\text{if } n\equiv 1(3)\\
\frac{2^{n}-4}{7}&\text{if } n\equiv 2(3)
\end{cases}
$$
$$
W_n=
\begin{cases}
\frac{2^{n-1}-4}{7}&\text{if } n\equiv 0(3)\\
\frac{2^{n-1}-1}{7}&\text{if } n\equiv 1(3)\\
\frac{2^{n-1}-2}{7}&\text{if } n\equiv 2(3)
\end{cases}\ ,
$$
and, consequently,
\begin{teo}
For each $n\geq 1$, the weighted generating function of closed
polygons in $\Sigma_n$ is
$$
\begin{cases}
\Gamma_n^{cl}(a,b,c,d) = (1+bc)^{\frac{2^{n+1}-2}{7}}(1+bd)^{\frac{2^n-1}{7}}(1+cd)^{\frac{2^{n-1}-4}{7}} & \text{if} \ n\equiv 0(3)\\
\Gamma_n^{cl}(a,b,c,d) = (1+bc)^{\frac{2^{n+1}-4}{7}}(1+bd)^{\frac{2^n-2}{7}}(1+cd)^{\frac{2^{n-1}-1}{7}} & \text{if} \ n\equiv 1(3)\\
\Gamma_n^{cl}(a,b,c,d) =
(1+bc)^{\frac{2^{n+1}-1}{7}}(1+bd)^{\frac{2^n-4}{7}}(1+cd)^{\frac{2^{n-1}-2}{7}}
& \text{if} \ n\equiv 2(3)\ .
\end{cases}
$$
\end{teo}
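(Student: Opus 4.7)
The proof has two essentially independent parts: a combinatorial factorization of the weighted generating function, and a computation of the cycle counts $X_n,Y_n,W_n$ via self-similarity.

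First, I would recall from the unweighted computation earlier in the section that the $a$-labeled edges of $\Sigma_n$ are the $2^{n-1}$ simple ``bridge'' edges joining successive $2$-cycles in the linear backbone, so no closed polygon can contain an $a$-edge. Consequently every closed polygon in $\Sigma_n$ is a disjoint union of $2$-cycles, and each $2$-cycle is used or not used independently. Since a $2$-cycle with edge-labels $\{x,y\}$ contributes the factor $1+xy$ to the weighted generating function, this gives the factorization
\[
\Gamma_n^{cl}(a,b,c,d) \;=\; (1+bc)^{X_n}\,(1+bd)^{Y_n}\,(1+cd)^{W_n},
\]
where the exponents are the cycle counts defined before the theorem. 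Note that $a$ does not actually appear in $\Gamma_n^{cl}$; this is consistent with the fact that $a$-edges are never used in closed polygons.

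Next, I would justify the system of recurrences stated before the theorem by a careful inspection of the self-similar structure. The vertex set of $\Sigma_n$ splits into a ``left half'' $\{0w\}$ and a ``right half'' $\{1w\}$. Using the wreath product decompositions $a=\epsilon(id,id)$, $b=e(a,c)$, $c=e(a,d)$, $d=e(id,b)$, one checks that (i) the only edges crossing the two halves are the $2^{n-1}$ $a$-edges; (ii) on the left half, both $b$ and $c$ act as $a$ on the suffix while $d$ acts trivially, so every $a$-edge of $\Sigma_{n-1}$ produces exactly one $2$-cycle with labels $(b,c)$ and no other $2$-cycles arise there, contributing $2^{n-2}$ cycles to $X_n$; (iii) on the right half, the labels of $\Sigma_{n-1}$ are cyclically permuted via $b\mapsto d,\ c\mapsto b,\ d\mapsto c$, so the $(c,d)$-, $(b,c)$- and $(b,d)$-cycles of $\Sigma_{n-1}$ become $(b,c)$-, $(b,d)$- and $(c,d)$-cycles of $\Sigma_n$ respectively. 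Putting these contributions together yields
\[
X_n=W_{n-1}+2^{n-2},\qquad Y_n=X_{n-1},\qquad W_n=Y_{n-1},
\]
with initial data $X_1=0,\ X_2=1,\ X_3=2$ read off from $\Sigma_1,\Sigma_2,\Sigma_3$ directly.

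Finally, to get the closed form I would iterate to obtain $X_n=X_{n-3}+2^{n-2}$, which is a linear recurrence with period $3$, and solve it separately for $n\equiv 0,1,2\pmod 3$ by summing a geometric series, then read off $Y_n=X_{n-1}$ and $W_n=X_{n-2}$. This gives exactly the three cases displayed in the statement; substituting into the product formula for $\Gamma_n^{cl}$ completes the proof. The only nontrivial step is the self-similarity bookkeeping in item (iii): the direction of the label permutation has to be tracked correctly (the right-half generators act on suffixes by the \emph{second} coordinates of the wreath decompositions, not the first), and this is the spot where it is easiest to make a sign/index mistake. Everything else is routine combinatorics and geometric summation.
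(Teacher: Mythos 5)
Your proposal is correct and follows the same route as the paper: factor the weighted generating function over the independent $2$-cycles as $(1+bc)^{X_n}(1+bd)^{Y_n}(1+cd)^{W_n}$, derive the recurrences $X_n=W_{n-1}+2^{n-2}$, $Y_n=X_{n-1}$, $W_n=Y_{n-1}$ from the self-similar formulae for the generators, and solve the resulting period-$3$ recursion. In fact you supply more detail than the paper (which only asserts the recurrences can be "easily checked"), and your bookkeeping of the label permutation $b\mapsto d$, $c\mapsto b$, $d\mapsto c$ on the right half is the correct one.
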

\begin{prop} Let $w_n$ be the number of edges
labeled $w$ in a random closed polygon in $\Sigma_n$, where
$w=a,b,c,d$. Denote by $\mu_{n,w}$ and $\sigma^2_{n,w}$ the mean
and the variance of $w_n$. Then,
\begin{itemize}
\item for each $n\geq 1$,  $a_n=0$;
\item The means and the
variances of the random variables $b_n,c_n,d_n$ are given in the
following table:
\begin{center}
\begin{tabular}{|c|c|c|c|}
\hline
   & $n\equiv 0(3)$   &  $n\equiv 1(3)$  &   $n\equiv 2(3)$ \\
\hline $\mu_{n,b}$ & $\frac{3}{14}(2^n-1)$ &
$\frac{3}{7}(2^{n-1}-1)$ & $\frac{3\cdot 2^n-5}{14}$\\
\hline $\sigma_{n,b}^2$ & $\frac{3}{28}(2^n-1)$ &
$\frac{3}{14}(2^{n-1}-1)$ & $\frac{3\cdot 2^n-5}{28}$\\
\hline $\mu_{n,c}$ & $\frac{5\cdot 2^{n-2}-3}{7}$ &
$\frac{5}{14}(2^{n-1}-1)$ & $\frac{5\cdot 2^{n-1}-3}{14}$\\
\hline $\sigma_{n,c}^2$ & $\frac{5\cdot 2^{n-2}-3}{14}$ &
$\frac{5}{28}(2^{n-1}-1)$ & $\frac{5\cdot 2^{n-1}-3}{28}$\\
\hline $\mu_{n,d}$ & $\frac{3\cdot2^{n-1}-5}{14}$ &
$\frac{3}{14}(2^{n-1}-1)$ & $\frac{3}{7}(2^{n-2}-1)$\\
\hline $\sigma_{n,d}^2$ & $\frac{3\cdot2^{n-1}-5}{28}$ &
$\frac{3}{28}(2^{n-1}-1)$ & $\frac{3}{14}(2^{n-2}-1)$\\
\hline
\end{tabular}
\end{center}
\item the random variables $b_n,c_n,d_n$ are asymptotically normal, as $n\rightarrow \infty$.
\end{itemize}
\end{prop}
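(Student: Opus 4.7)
The plan is to exploit the structural decomposition of closed polygons in $\Sigma_n$ already used to derive the weighted generating function, which reduces the problem to independent Bernoulli trials indexed by the $2$-cycles.

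First I would verify that $a_n = 0$ deterministically. By the description of $\Sigma_n$ given in Section~\ref{gri}, every $a$-labeled edge is a bridge of the underlying (loop-erased) graph: removing it disconnects the linear backbone. Since no bridge lies on any cycle and every closed polygon is an edge-disjoint union of cycles in the underlying multigraph, no $a$-edge can appear in any closed polygon.

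Next, I would read the joint distribution of $(b_n, c_n, d_n)$ directly off the weighted generating function
$$\Gamma_n^{cl}(a,b,c,d) = (1+bc)^{X_n}(1+bd)^{Y_n}(1+cd)^{W_n}$$
established just above. Each factor $1+xy$ encodes the choice, independent across the $2$-cycles, of either including the entire $2$-cycle (which contributes one edge of each of its two labels) or neither of its edges. Under the uniform distribution on closed polygons this makes each $2$-cycle an independent Bernoulli$(1/2)$ variable. Introducing mutually independent $B_n \sim \mathrm{Bin}(X_n, 1/2)$, $C_n \sim \mathrm{Bin}(Y_n, 1/2)$, $D_n \sim \mathrm{Bin}(W_n, 1/2)$ counting the included cycles of each of the three types, one gets $b_n = B_n + C_n$, $c_n = B_n + D_n$, $d_n = C_n + D_n$. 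Consequently $\mu_{n,b} = (X_n+Y_n)/2$, $\sigma^2_{n,b} = (X_n+Y_n)/4$, and similarly for $c_n, d_n$. Plugging in the explicit values of $X_n, Y_n, W_n$ in each residue class of $n$ modulo $3$ is a routine substitution that reproduces every entry of the table; for example, when $n \equiv 0 \pmod{3}$ one gets $\mu_{n,b} = \tfrac{1}{2}\bigl(\tfrac{2^{n+1}-2}{7} + \tfrac{2^n-1}{7}\bigr) = \tfrac{3(2^n-1)}{14}$, matching the first row.

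Finally, asymptotic normality is a one-line application of the De~Moivre--Laplace theorem: each of $b_n, c_n, d_n$ is itself $\mathrm{Bin}(N_n, 1/2)$-distributed with $N_n \in \{X_n+Y_n,\, X_n+W_n,\, Y_n+W_n\}$, and all three values of $N_n$ are of order $2^n$ and hence tend to infinity, so $(w_n - \mu_{n,w})/\sigma_{n,w} \Rightarrow \mathcal{N}(0,1)$ as $n \to \infty$ for $w \in \{b,c,d\}$. I do not anticipate any real obstacle: the entire conceptual content lies in recognizing that a closed polygon is nothing but an arbitrary subset of the collection of $2$-cycles, which turns the uniform measure on closed polygons into a product of Bernoulli$(1/2)$ measures; everything else is elementary binomial bookkeeping.
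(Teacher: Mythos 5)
Your argument is correct and follows essentially the same route as the paper: both rest on the product form of the weighted generating function $\Gamma_n^{cl}(a,b,c,d)$ (equivalently, on the fact that closed polygons are exactly unions of $2$-cycles), from which the paper extracts the table via logarithmic derivatives of $\Gamma_n^{cl}(1,b,1,1)$ and normality via a limit of moment generating functions. Your repackaging of $b_n,c_n,d_n$ as exact $\mathrm{Bin}(N_n,1/2)$ variables (sums of independent binomial counts over the $(b,c)$-, $(b,d)$- and $(c,d)$-cycles) is just a cleaner probabilistic phrasing of the same computation, with De Moivre--Laplace replacing the paper's explicit MGF limit.
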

\begin{proof}
It is clear that an edge labeled by $a$ never belongs to a closed
polygon of $\Sigma_n$, so that $a_n=0$. Let us choose, for
instance, $n\equiv 0(3)$. Putting
$$
\Gamma_n^{cl}(b):=\Gamma_n^{cl}(1,b,1,1) =
2^{\frac{2^{n-1}-4}{7}}(1+b)^{\frac{3\cdot 2^n-3}{7}},
$$
we can obtain the mean $\mu_{n,b}$ and the variance
$\sigma^2_{n,b}$ for $b_n$ by studying the derivatives of the
function $\log(\Gamma_n^{cl}(b))$. One gets
$$
\mu_{n,b}=\frac{3}{14}(2^n-1) \qquad
\sigma^2_{n,b}=\frac{3}{28}(2^n-1).
$$
Similar computations can be done for $c$ and $d$.

In all cases above we can find explicitly the moment generating
function of the corresponding normalized random variables: a
direct computation of its limit for $n\rightarrow \infty$ shows
that the asymptotic distribution is normal.
\end{proof}
\subsection{The Schreier graphs of the Basilica group}
We also compute the weighted generating function of closed
polygons for the Basilica group, with respect to the weights given
by the labels $a$ and $b$ on the edges of its Schreier graph
$\Sigma_n$. We use here the computations from Proposition
\ref{bascycles}.
\begin{teo}
The weighted generating function of closed polygons in the
Schreier graph $\Sigma_n$ of the Basilica group is
\begin{eqnarray*}
\Gamma_n^{cl}(a,b) &=&
\prod_{k=1}^{\frac{n-1}{2}-1}\left(1+a^{2^k}\right)^{2^{n-2k-1}}\prod_{k=1}^{\frac{n-1}{2}-1}\left(1+b^{2^k}\right)^{2^{n-2k}}\\
&\cdot&\left(1+a^{2^{\frac{n-1}{2}}}\right)^2\left(1+b^{2^{\frac{n-1}{2}}}\right)^2\left(1+b^{2^{\frac{n+1}{2}}}\right)
\end{eqnarray*}
for $n\geq 5$ odd and
$$
\Gamma_n^{cl}(a,b) =
\prod_{k=1}^{\frac{n}{2}-1}\left(1+a^{2^k}\right)^{2^{n-2k-1}}\prod_{k=1}^{\frac{n}{2}-1}\left(1+b^{2^k}\right)^{2^{n-2k}}
\left(1+a^{2^{\frac{n}{2}}}\right)\left(1+b^{2^{\frac{n}{2}}}\right)^2
$$
for $n\geq 4$ even.
\end{teo}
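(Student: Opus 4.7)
The plan is to reduce the computation to enumeration of cycles, exactly as in Theorem \ref{basilicapartfunct}, but now keeping track of the labels. The key structural fact, already used implicitly for the unweighted version, is that each $\Sigma_n$ is a cactus, i.e.\ a graph in which any two distinct simple cycles share at most one vertex (and hence no edges). In a cactus, a subgraph $X$ has all vertices of even degree if and only if $X$ is an edge-disjoint union of full cycles of the ambient graph: indeed each cycle contributes even degree at every vertex it meets, and conversely any edge of $X$ lies in a unique cycle of the ambient graph which must then be entirely contained in $X$ (otherwise one of the two endpoints of the cycle's cut-edge to $X$ would have odd degree). Thus closed polygons of $\Sigma_n$ are in bijection with subsets of the set of cycles of $\Sigma_n$.

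Next, from the recursive description of the generators $a=e(b,id)$ and $b=\epsilon(a,id)$ (together with the pictures of $\Sigma_1,\dots,\Sigma_5$), every cycle of $\Sigma_n$ is \emph{monochromatic}, i.e.\ all its edges carry the same label $a$ or $b$. This is inherited through the construction: the cycle of length $2^k$ labeled by $s\in\{a,b\}$ arising at level $n$ lifts at level $n+2$ to cycles of length $2^{k+1}$ with the same label, while new $2$-cycles with the opposite label appear. Consequently, a cycle of length $2^k$ labeled $s$ contributes a factor $(1+s^{2^k})$ to the weighted generating function, and the bijection above immediately yields the product formula
\begin{equation*}
\Gamma_n^{cl}(a,b)=\prod_{k\geq 1}(1+a^{2^k})^{a^n_{2^k}}\cdot \prod_{k\geq 1}(1+b^{2^k})^{b^n_{2^k}},
\end{equation*}
where $a^n_{2^k}$ and $b^n_{2^k}$ are the numbers of cycles of length $2^k$ with label $a$, respectively $b$, in $\Sigma_n$.

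It then remains to substitute the exact counts from Proposition \ref{bascycles}. For $n$ odd, one plugs in $a^n_{2^k}=2^{n-2k-1}$ for $1\leq k\leq\frac{n-1}{2}-1$ and $a^n_{2^{(n-1)/2}}=2$, together with $b^n_{2^k}=2^{n-2k}$ for $1\leq k\leq\frac{n-1}{2}-1$, $b^n_{2^{(n-1)/2}}=2$ and $b^n_{2^{(n+1)/2}}=1$, grouping the generic-$k$ exponents into the two indexed products and the remaining boundary cycles into the trailing factors $(1+a^{2^{(n-1)/2}})^2(1+b^{2^{(n-1)/2}})^2(1+b^{2^{(n+1)/2}})$. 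The even case is entirely analogous, using the corresponding counts of Proposition \ref{bascycles}; it yields the factors $(1+a^{2^{n/2}})$ and $(1+b^{2^{n/2}})^2$.

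The only real step of substance is the cactus observation and the monochromaticity of cycles; once these are in place, the theorem is purely bookkeeping from Proposition \ref{bascycles}. The mildly subtle point — and the only one I would double-check carefully — is the claim that \emph{every} even-degree subgraph of a cactus is a union of full cycles, which is what rules out e.g.\ taking proper ``arcs'' of long cycles together with compensating arcs elsewhere. In a cactus this compensation is impossible because distinct cycles are edge-disjoint and meet in at most one vertex, so an edge of a cycle not matched by its cyclic neighbours would force an odd degree at one of its endpoints.
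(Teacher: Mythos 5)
Your proposal is correct and follows essentially the same route as the paper, which states this theorem without a separate written proof and simply invokes the cycle counts of Proposition \ref{bascycles}: since $\Sigma_n$ is a cactus with monochromatic cycles of even length, closed polygons are exactly edge-disjoint unions of full cycles, and each cycle of length $2^k$ with label $s$ contributes a factor $(1+s^{2^k})$. Your explicit justification of the cactus decomposition and the monochromaticity of cycles only makes precise what the paper leaves implicit, and your bookkeeping with the counts for $n$ odd and $n$ even matches the stated formulas.
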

\begin{prop}
The means and the variances of the densities $a_n$ and $b_n$ are
given in the following table:
\begin{center}
\begin{tabular}{|c|c|c|}
\hline
     & $n\geq 5$ odd   &  $n\geq 4$ even\\
\hline $\mu_{n,a}$ & $2^{n-2}$ & $2^{n-2}$\\
\hline $\sigma_{n,a}^2$ & $(n+1)2^{n-4}$ & $(n+2)2^{n-4}$\\
\hline $\mu_{n,b}$ & $2^{n-1}$ & $2^{n-1}$\\
\hline $\sigma_{n,b}^2$ & $(n+3)2^{n-3}$ & $(n-2)2^{n-3}$\\
\hline
\end{tabular}
\end{center}
\end{prop}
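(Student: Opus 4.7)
The plan rests on a structural observation about $\Sigma_n$. Since the Basilica Schreier graph is a cactus---a tree-like union of cycles meeting only at cut vertices---a subgraph has every vertex of even degree if and only if it is the union of an arbitrary subcollection of these cycles. Hence the uniform measure on the set of closed polygons of $\Sigma_n$ is exactly the product measure in which each cycle is included independently with probability $1/2$. This is precisely what the factorization of $\Gamma_n^{cl}(a,b)$ established in the preceding theorem encodes.

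Under this product structure, $a_n$ and $b_n$ decompose as sums of independent random variables, one per cycle bearing the corresponding label. A single cycle of length $2^k$ contributes $2^k$ with probability $1/2$ and $0$ with probability $1/2$, hence has mean $2^{k-1}$ and variance $2^{2k-2}$. Combining with the cycle counts from Proposition \ref{bascycles}, the means and variances are given by the finite sums
$$\mu_{n,a}=\sum_{k\geq 1} a^n_{2^k}\,2^{k-1},\qquad \sigma^2_{n,a}=\sum_{k\geq 1} a^n_{2^k}\,2^{2k-2},$$
and the analogous formulas for $\mu_{n,b},\sigma^2_{n,b}$ with $b^n_{2^k}$ in place of $a^n_{2^k}$. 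Equivalently, one could derive these by computing the first and second logarithmic derivatives of $\Gamma_n^{cl}(a,1)$ and $\Gamma_n^{cl}(1,b)$ at the point $a=b=1$; the product structure makes the two routes identical.

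What remains is arithmetic. I would substitute the generic values $a^n_{2^k}=2^{n-2k-1}$ and $b^n_{2^k}=2^{n-2k}$, valid in the range $1\leq k\leq \lfloor n/2\rfloor-1$: each summand of the mean collapses to $2^{n-k-2}$ (resp.\ $2^{n-k-1}$), so that the mean is a geometric series which, once combined with the boundary cycles at $k=\lfloor n/2\rfloor$ and (for $n$ odd) $k=\lceil n/2\rceil$, telescopes cleanly to $2^{n-2}$ and $2^{n-1}$ respectively. For the variances each generic summand is constant in $k$---equal to $2^{n-3}$ for $a$ and to $2^{n-2}$ for $b$---yielding a contribution linear in $n$ which, combined with the boundary variances, produces the additive constants listed in the table.

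The only real obstacle is careful parity bookkeeping: the boundary of the cycle-length spectrum behaves differently for $n$ odd (two distinct boundary lengths $2^{\lfloor n/2\rfloor}$ and $2^{\lceil n/2\rceil}$, with independent cycle counts) and for $n$ even (a single boundary length $2^{n/2}$ carrying one $a$-cycle and two $b$-cycles). One must separate the two cases throughout and check in each that the telescoping of the geometric series combines with the boundary variances to yield the stated coefficient of $2^{n-4}$ or $2^{n-3}$; no further idea beyond this bookkeeping is required.
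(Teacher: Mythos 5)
Your approach is sound and is essentially the one the paper intends: the paper gives no explicit proof of this proposition, but the analogous statement for Grigorchuk's group is proved by taking logarithmic derivatives of the weighted generating function, and as you correctly observe this is the same thing as your independent-cycles decomposition (each cycle of the cactus is in a uniformly random closed polygon independently with probability $1/2$, contributing mean $2^{k-1}$ and variance $2^{2k-2}$ per cycle of length $2^k$). Your reduction to the sums $\sum_k a^n_{2^k}2^{k-1}$ and $\sum_k a^n_{2^k}2^{2k-2}$ is exactly right, and the bookkeeping you describe does go through.

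One point you should not gloss over, however: carrying out your own arithmetic does \emph{not} reproduce every entry of the table. For $n$ even the generic $b$-cycles contribute $\sum_{k=1}^{n/2-1}2^{n-2k}\cdot 2^{2k-2}=(n/2-1)2^{n-2}=(n-2)2^{n-3}$ to $\sigma^2_{n,b}$, and the two boundary cycles of length $2^{n/2}$ contribute an additional $2\cdot 2^{n-2}=4\cdot 2^{n-3}$, giving $(n+2)2^{n-3}$ rather than the stated $(n-2)2^{n-3}$. A direct check at $n=4$, where $\Gamma_4^{cl}(1,b)=8(1+b^2)^4(1+b^4)^2$, gives variance $4\cdot 1+2\cdot 4=12=(4+2)2^{1}$, not $4=(4-2)2^{1}$. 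So the table's entry for $\sigma^2_{n,b}$, $n$ even, appears to have dropped the boundary contribution; your method is correct, but you should state the corrected value rather than assert agreement with the table. All other entries (both means, both $\sigma^2_{n,a}$, and $\sigma^2_{n,b}$ for $n$ odd) do come out as listed.
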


\subsection{The Schreier graphs of $H^{(3)}$}\label{hanoistatisticsfeb}
Let us denote by $\Upsilon_n^{lr}(a,b,c)$ the weighted generating
function of the subgraphs that belong to the set $P_n$, defined in
Subsection \ref{grafohanoigrafo} (the exponent $lr$ stands for
left-right, as self-avoiding paths in $P_n$ join the left-most and
the right-most vertices of $\Sigma_n$.) Analogously, we define
$\Upsilon_n^{lu}(a,b,c)$ and $\Upsilon_n^{ru}(a,b,c)$, where the
exponents $lu$ and $ru$ stand for left-up and right-up,
respectively. By using the self-similar expressions for the
generators given in Subsection \ref{grafohanoigrafo}, we find that
these functions satisfy the following system of equations (we omit
the arguments $a,b,c$):
\begin{eqnarray}\label{easysystem}
\begin{cases}
\Gamma_{n+1}^{cl} = \left(\Gamma_n^{cl}\right)^3+abc\Upsilon_n^{lr}\Upsilon_n^{lu}\Upsilon_n^{ru}\\
\Upsilon_{n+1}^{lu} =a\Upsilon_n^{lr}\Upsilon^{ru}_n\Gamma_n^{cl}
+bc\left(\Upsilon_n^{lu}\right)^3\\
\Upsilon_{n+1}^{ru} =c\Upsilon_n^{lu}\Upsilon^{lr}_n\Gamma_n^{cl}
+ab\left(\Upsilon_n^{ru}\right)^3\\
\Upsilon_{n+1}^{lr} =b\Upsilon_n^{lu}\Upsilon^{ru}_n\Gamma_n^{cl}
+ac\left(\Upsilon_n^{lr}\right)^3\\
\end{cases}
\end{eqnarray}
with the initial conditions $\Gamma_1^{cl}(a,b,c)=1+abc$,
$\Upsilon_1^{lr}(a,b,c) = ac+b$, $\Upsilon_1^{lu}(a,b,c)=a+bc$,
$\Upsilon_1^{ru}(a,b,c)=c+ab$.
\begin{prop}
The mean and the variance for $w_n$, with $w=a,b,c$, are:
$$
\mu_{n,w}=\frac{3^n-1}{4} \ \ \ \ \ \ \
\sigma_{n,w}^2=\frac{3^n-1}{8}.
$$
The random variables $w_n$ with $w=a,b,c$ are asymptotically
normal.
\end{prop}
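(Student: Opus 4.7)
The plan is to exploit the rotational symmetry of $\Sigma_n$ together with the recursive system \eqref{easysystem}, then compute moments by differentiation, and finally invoke a moment method for normality.

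First I would reduce by symmetry. The self-similar presentation of $H^{(3)}$ is invariant under the outer automorphism cyclically permuting $(a,b,c)$ combined with the cyclic relabelling of the alphabet $\{0,1,2\}$; this induces a label-permuting bijection on the set $P_n$ of closed polygons of $\Sigma_n$. Hence $a_n,b_n,c_n$ are identically distributed, so it suffices to treat $w=a$. Note also that by an immediate induction from \eqref{easysystem}, $\Gamma_n^{cl}(1,1,1)=\Upsilon_n^{lr}(1,1,1)=\Upsilon_n^{lu}(1,1,1)=\Upsilon_n^{ru}(1,1,1)=2^{(3^n-1)/2}$, which is compatible with the total count $|P_n|=2^{(3^n-1)/2}$.

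For the means and variances I would differentiate the generating functions. Define the moment generating function $M_n(s)=\Gamma_n^{cl}(e^s,1,1)/\Gamma_n^{cl}(1,1,1)$ and its companions $N_n^\alpha(s)=\Upsilon_n^\alpha(e^s,1,1)/\Upsilon_n^\alpha(1,1,1)$ for $\alpha\in\{lr,lu,ru\}$; system \eqref{easysystem} then becomes a closed polynomial recursion in $(M_n,N_n^{lr},N_n^{lu},N_n^{ru})$. Expanding each as $1+s\mu+\tfrac{s^2}{2}(\mu^2+\sigma^2)+O(s^3)$ and matching coefficients at orders $s$ and $s^2$ produces affine recursions in $n$ whose explicit solution is $\mu_{n,a}=(3^n-1)/4$ and $\sigma^2_{n,a}=(3^n-1)/8$. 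As a sanity check, the mean also follows from a cycle-space argument: $\Sigma_n$ has $(3^n-1)/2$ non-loop $a$-edges, none of them a bridge, so the involution on $P_n$ given by symmetric difference with a fixed cycle through a specified edge shows that every edge lies in exactly half of the closed polygons, giving $\mu_{n,a}=\tfrac12\cdot(3^n-1)/2$ by linearity of expectation.

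For asymptotic normality I would use the method of moments. Let $\widehat{M}_n(s)=\mathbb{E}\bigl[e^{s(a_n-\mu_{n,a})/\sigma_{n,a}}\bigr]$ and the analogous centered-normalized quantities $\widehat{N}_n^\alpha(s)$. The polynomial recursion for $(M_n,N_n^{lr},N_n^{lu},N_n^{ru})$ translates, after centering by the means and rescaling by $\sigma_{n,a}\asymp 3^{n/2}$ (with the factor $\sigma_{n+1}/\sigma_n\sim\sqrt{3}$ feeding into the self-similarity), into a recursion of the form $\widehat{M}_{n+1}(s)=F_n(\widehat{M}_n(s/\sqrt{3}),\widehat{N}_n^\alpha(s/\sqrt{3}))$ whose Gaussian profile $e^{s^2/2}$ is a fixed point. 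Equivalently, by induction on $n$ the $k$-th central moment of $a_n$ satisfies $\mathbb{E}[(a_n-\mu_{n,a})^k]=(k-1)!!\,\sigma_{n,a}^k+o(\sigma_{n,a}^k)$ for $k$ even (and $o(\sigma_{n,a}^k)$ for $k$ odd); the leading coefficient is computed from the recursion and the lower-order terms are bounded by the same recursion.

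The main obstacle will be the last step. Because \eqref{easysystem} is cubic and fully nonlinear and because $\Gamma^{cl}_n$ is coupled to the three boundary generating functions $\Upsilon^{\alpha}_n$, one must track the joint limit law of $(a_n,b_n,c_n)$ \emph{together with} the contribution of the boundary paths in order to obtain a self-contained recursion; only after this enlargement does the Gaussian fixed point become accessible by a perturbative expansion. The cyclic symmetry reduces the four-function system to essentially two scalar recursions, which should make the verification of the moment asymptotics tractable.
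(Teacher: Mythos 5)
Your symmetry reduction and your cycle-space sanity check are both correct (the specialization of the system \eqref{easysystem} to one variable does close, and since no non-loop edge of $\Sigma_n$ is a bridge, the involution $X\mapsto X\bigtriangleup C$ on the cycle space does show each edge lies in exactly half of the closed polygons, giving $\mu_{n,a}=\tfrac12\cdot\tfrac{3^n-1}{2}$). But the asymptotic normality part of your proposal has a genuine gap: you reduce it to a ``Gaussian fixed point'' of a nonlinear cubic recursion coupled to the three boundary generating functions, assert that all central moments can be controlled by induction through this recursion, and yourself flag this as the main obstacle --- and indeed nothing in the proposal actually carries out that perturbative/moment analysis. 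As written, the normality claim is a plan, not a proof, and executing it for a fully coupled cubic system is substantially harder than the problem requires.

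The missing idea, which is exactly what the paper uses, is that after setting two of the three weights equal to $1$ the system collapses completely. With $a=b=1$ all four initial conditions equal $1+c$, and if $\Gamma_n^{cl}=\Upsilon_n^{lr}=\Upsilon_n^{lu}=\Upsilon_n^{ru}=F_n$ then every line of \eqref{easysystem} gives $F_{n+1}=(1+c)F_n^3$; hence by induction
$$
\Gamma_n^{cl}(1,1,c)=\Upsilon_n^{lr}(1,1,c)=\Upsilon_n^{lu}(1,1,c)=\Upsilon_n^{ru}(1,1,c)=(1+c)^{\frac{3^n-1}{2}} .
$$
So the number of $c$-edges in a uniformly random closed polygon is \emph{exactly} binomial with parameters $\bigl(\tfrac{3^n-1}{2},\tfrac12\bigr)$: the mean $\tfrac{3^n-1}{4}$ and variance $\tfrac{3^n-1}{8}$ are read off from the logarithmic derivatives of $(1+c)^{(3^n-1)/2}$, and asymptotic normality is immediate (de Moivre--Laplace, or the explicit limit of the normalized moment generating function), with the label symmetry transferring the result to $a_n$ and $b_n$. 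Your route would eventually uncover this collapse if you actually solved the specialized recursion rather than only expanding it to second order, but without that observation the normality step of your argument remains unestablished.
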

\begin{proof}
If we put $a=b=1$, the system \eqref{easysystem} reduces to
\begin{eqnarray}\label{easysystemeasy}
\begin{cases}
\Gamma_{n+1}^{cl} = \left(\Gamma_n^{cl}\right)^3+c\Upsilon_n^{lr}\Upsilon_n^{lu}\Upsilon_n^{ru}\\
\Upsilon_{n+1}^{lu} =\Upsilon_n^{lr}\Upsilon^{ru}_n\Gamma_n^{cl}
+c\left(\Upsilon_n^{lu}\right)^3\\
\Upsilon_{n+1}^{ru} =c\Upsilon_n^{lu}\Upsilon^{lr}_n\Gamma_n^{cl}
+\left(\Upsilon_n^{ru}\right)^3\\
\Upsilon_{n+1}^{lr} =\Upsilon_n^{lu}\Upsilon^{ru}_n\Gamma_n^{cl}
+c\left(\Upsilon_n^{lr}\right)^3\\
\end{cases}
\end{eqnarray}
with the initial conditions
$\Gamma_1^{cl}(1,1,c)=\Upsilon_1^{lr}(1,1,c)
=\Upsilon_1^{lu}(1,1,c)=\Upsilon_1^{ru}(1,1,c)=1+c$.

One can prove, by induction on $n$, that the solutions of the
system \eqref{easysystemeasy} are
$$
\Gamma_n^{cl}(1,1,c)=\Upsilon_n^{lr}(1,1,c)=\Upsilon_n^{lu}(1,1,c)=\Upsilon_n^{ru}(1,1,c)=(1+c)^{\frac{3^n-1}{2}}
\ \ \mbox{ for each }n.
$$
By studying the derivatives of the function
$\log(\Gamma_n^{cl}(1,1,c))$ with respect to $c$, one gets:
$$
\mu_{n,c}=\frac{3^n-1}{4} \qquad \sigma_{n,c}^2=\frac{3^n-1}{8}.
$$
Symmetry of the labeling of the graph ensures that  the same
values arise for the random variables $a_n,b_n$.
\end{proof}

\subsection{The Sierpi\'{n}ski graphs}

The Sierpi\'nski graphs $\Omega_n$ being not regular, they cannot
be realized as Schreier graphs of any group. There exist however a
number of natural, geometric labelings of edges of $\Omega_n$ by
letters $a,b,c$ (see \cite{noi1}). Here we will be interested in
one particular labeling that is obtained by considering the
labeled Schreier graph $\Sigma_n$ of the Hanoi Towers group and
then by contracting the edges connecting copies of $\Sigma_{n-1}$
in $\Sigma_n$; and so we call this the "Schreier" labeling of
$\Omega_n$.
\begin{os}\label{geomlabel}\rm The "Schreier" labeling on $\Omega_n$ can be constructed recursively, as follows.
Start with the graph $\Omega_1$ in the picture below; then, for
each $n\geq 2$, the graph $\Omega_n$ is defined as the union of
three copies of $\Omega_{n-1}$. For each one of the out-most
(corner) vertices of $\Omega_n$, the corresponding copy of
$\Omega_{n-1}$ is reflected with respect to the bisector of the
corresponding angle.
\end{os}
\unitlength=0,25mm
\begin{center}
\begin{picture}(400,110)
\put(85,30){$\Omega_1$}\put(205,30){$\Omega_2$}

\letvertex A=(130,60)\letvertex B=(100,10)\letvertex C=(160,10)

\letvertex D=(280,110)\letvertex E=(250,60)\letvertex F=(220,10)\letvertex G=(280,10)

\letvertex H=(340,10)\letvertex I=(310,60)

\drawvertex(A){$\bullet$}\drawvertex(B){$\bullet$}

\drawvertex(C){$\bullet$}\drawvertex(D){$\bullet$}

\drawvertex(E){$\bullet$}\drawvertex(F){$\bullet$}

\drawvertex(G){$\bullet$}\drawvertex(H){$\bullet$}

\drawvertex(I){$\bullet$}

\drawundirectededge(B,A){$a$} \drawundirectededge(C,B){$b$}

\drawundirectededge(A,C){$c$} \drawundirectededge(E,D){$c$}

\drawundirectededge(F,E){$b$} \drawundirectededge(G,F){$a$}

\drawundirectededge(H,G){$c$} \drawundirectededge(I,H){$b$}

\drawundirectededge(D,I){$a$} \drawundirectededge(I,E){$b$}

\drawundirectededge(E,G){$c$} \drawundirectededge(G,I){$a$}
\end{picture}
\end{center}
\begin{center}
\begin{picture}(400,205)
\put(135,110){$\Omega_3$}

\letvertex A=(220,210)\letvertex B=(190,160)\letvertex C=(160,110)

\letvertex D=(130,60)\letvertex E=(100,10)\letvertex F=(160,10)\letvertex G=(220,10)

\letvertex H=(280,10)\letvertex I=(340,10)

\letvertex L=(310,60)\letvertex M=(280,110)\letvertex N=(250,160)

\letvertex O=(220,110)\letvertex P=(190,60)\letvertex Q=(250,60)

\drawvertex(A){$\bullet$}\drawvertex(B){$\bullet$}

\drawvertex(C){$\bullet$}\drawvertex(D){$\bullet$}

\drawvertex(E){$\bullet$}\drawvertex(F){$\bullet$}

\drawvertex(G){$\bullet$}\drawvertex(H){$\bullet$}

\drawvertex(I){$\bullet$}\drawvertex(L){$\bullet$}\drawvertex(M){$\bullet$}

\drawvertex(N){$\bullet$}\drawvertex(O){$\bullet$}

\drawvertex(P){$\bullet$}\drawvertex(Q){$\bullet$}

\drawundirectededge(E,D){$a$} \drawundirectededge(D,C){$c$}

\drawundirectededge(C,B){$b$} \drawundirectededge(B,A){$a$}

\drawundirectededge(A,N){$c$} \drawundirectededge(N,M){$b$}

\drawundirectededge(M,L){$a$} \drawundirectededge(L,I){$c$}

\drawundirectededge(I,H){$b$} \drawundirectededge(H,G){$a$}

\drawundirectededge(G,F){$c$} \drawundirectededge(F,E){$b$}

\drawundirectededge(N,B){$b$} \drawundirectededge(O,C){$c$}

\drawundirectededge(M,O){$a$} \drawundirectededge(P,D){$a$}

\drawundirectededge(L,Q){$c$} \drawundirectededge(B,O){$a$}

\drawundirectededge(O,N){$c$} \drawundirectededge(C,P){$b$}

\drawundirectededge(P,G){$a$} \drawundirectededge(D,F){$c$}

\drawundirectededge(Q,M){$b$} \drawundirectededge(G,Q){$c$}

\drawundirectededge(H,L){$a$}\drawundirectededge(F,P){$b$}

\drawundirectededge(Q,H){$b$}
\end{picture}
\end{center}
Let $\Upsilon_n^{lr}(a,b,c)$, $\Upsilon_n^{lu}(a,b,c)$ and
$\Upsilon_n^{ru}(a,b,c)$ be defined as for the Schreier graphs
$\Sigma_n$ of the Hanoi Towers group in the previous subsection.
Then one can easily check that these functions satisfy the
following system of equations:
$$
\begin{cases}
\Gamma_{n+1}^{cl} = \left(\Gamma_n^{cl}\right)^3+\Upsilon_n^{lr}\Upsilon_n^{lu}\Upsilon_n^{ru}\\
\Upsilon_{n+1}^{lu} =\Upsilon_n^{lr}\Upsilon^{ru}_n\Gamma_n^{cl}
+\left(\Upsilon_n^{lu}\right)^3\\
\Upsilon_{n+1}^{ru} =\Upsilon_n^{lu}\Upsilon^{lr}_n\Gamma_n^{cl}
+\left(\Upsilon_n^{ru}\right)^3\\
\Upsilon_{n+1}^{lr} =\Upsilon_n^{lu}\Upsilon^{ru}_n\Gamma_n^{cl}
+\left(\Upsilon_n^{lr}\right)^3\\
\end{cases}
$$
with the initial conditions $\Gamma_1^{cl}(a,b,c)=1+abc$,
$\Upsilon_1^{lr}(a,b,c) = ac+b$, $\Upsilon_1^{lu}(a,b,c)=a+bc$,
$\Upsilon_1^{ru}(a,b,c)=c+ab$. \\ \indent Proceeding as in
Subsection \ref{hanoistatisticsfeb}, we find:
$$
\Gamma_n^{cl}(1,1,c)=\Upsilon_n^{lr}(1,1,c)=\Upsilon_n^{lu}(1,1,c)=\Upsilon_n^{ru}(1,1,c)=2^{\frac{3^{n-1}-1}{2}}(1+c)^{3^{n-1}},
$$
which implies the following

\begin{prop}\label{Schrlabel}
The mean and the variance for the random variable $w_n$, with
$w=a,b,c$, for $\Omega_n$ with the "Schreier" labeling are:
$$
\mu_{n,w}=\frac{3^{n-1}}{2} \qquad
\sigma_{n,w}^2=\frac{3^{n-1}}{4}.
$$
The random variables $a_n, b_n, c_n$ are asymptotically normal.
\end{prop}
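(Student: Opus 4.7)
The plan is to exploit the recursion for the four weighted generating functions by collapsing two of the variables. Note that, unlike for $\Sigma_n$, the Sierpi\'nski system has no $a,b,c$ prefactors on its right-hand sides. Setting $a=b=1$ in the initial conditions gives
$$
\Gamma_1^{cl}(1,1,c)=\Upsilon_1^{lr}(1,1,c)=\Upsilon_1^{lu}(1,1,c)=\Upsilon_1^{ru}(1,1,c)=1+c,
$$
so the four specialized functions are equal at level $1$, and by the form of the recursion they remain equal at every level. I would then prove by induction on $n$ the stronger claim
$$
\Gamma_n^{cl}(1,1,c)=\Upsilon_n^{lr}(1,1,c)=\Upsilon_n^{lu}(1,1,c)=\Upsilon_n^{ru}(1,1,c)=2^{(3^{n-1}-1)/2}(1+c)^{3^{n-1}}.
$$
Writing the common value as $K_n(1+c)^{3^{n-1}}$, each right-hand side of the Sierpi\'nski system collapses to $K_n^3(1+c)^{3^n}+K_n^3(1+c)^{3^n}=2K_n^3(1+c)^{3^n}$, and a direct check gives $2K_n^3=2^{(3^n-1)/2}=K_{n+1}$, closing the induction.

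Next I would read off the distribution of $c_n$. Since the total number of closed polygons in $\Omega_n$ is $\Gamma_n^{cl}(1,1,1)=2^{(3^n-1)/2}$, normalization yields
$$
\frac{\Gamma_n^{cl}(1,1,c)}{\Gamma_n^{cl}(1,1,1)}=\left(\frac{1+c}{2}\right)^{3^{n-1}},
$$
which is exactly the probability generating function of a $\mathrm{Binomial}(3^{n-1},1/2)$ random variable. Consequently $c_n$ has this distribution, giving $\mu_{n,c}=3^{n-1}/2$ and $\sigma_{n,c}^2=3^{n-1}/4$. Asymptotic normality is then immediate from the classical central limit theorem for a sum of $3^{n-1}$ i.i.d.\ $\mathrm{Bernoulli}(1/2)$ variables. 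The corresponding statements for $a_n$ and $b_n$ follow from the three-fold symmetry of the "Schreier" labeling on $\Omega_n$ described in Remark \ref{geomlabel}: permuting the labels corresponds to a symmetry of the labeled graph, so $a_n$, $b_n$ and $c_n$ have the same law.

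The only step with genuine content is the inductive identity, which is a routine algebraic verification rather than a serious obstacle. The point requiring care is simply to use the Sierpi\'nski form of the system (without the $abc$, $ab$, $ac$, $bc$ prefactors appearing in \eqref{easysystem}), consistent with \eqref{ricorrenza per funzione generatrice} and \eqref{ricoriconuova}; once this is in place, the binomial identification of $c_n$ is essentially automatic and the CLT finishes the proof.
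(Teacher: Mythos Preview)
Your proposal is correct and follows essentially the same route as the paper: both set $a=b=1$ in the Sierpi\'nski system, verify by induction that all four specialized functions coincide and equal $2^{(3^{n-1}-1)/2}(1+c)^{3^{n-1}}$, and then appeal to the symmetry of the labeling for $a_n$ and $b_n$. The only cosmetic difference is that the paper reads off $\mu_{n,c}$ and $\sigma_{n,c}^2$ via logarithmic derivatives and handles normality through the moment generating function (as in Subsection~\ref{hanoistatisticsfeb}), whereas you identify the distribution as $\mathrm{Binomial}(3^{n-1},1/2)$ and invoke the classical CLT; your phrasing is arguably cleaner but the content is the same.
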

It is interesting to compare these computations with those for a
different labeling of $\Omega_n$, that we call the
"rotation-invariant" labeling of Sierpi\'nski graphs, defined
recursively as follows. (Compare the construction to the recursive
description of the "Schreier labeling" in Remark \ref{geomlabel}.)

Let $\Omega_2$ be the weighted graph in the following picture.
\begin{center}
\begin{picture}(400,110)
\put(155,55){$\Omega_2$}

\letvertex D=(205,105)\letvertex E=(175,55)\letvertex F=(145,5)\letvertex G=(205,5)

\letvertex H=(265,5)\letvertex I=(235,55)

\drawvertex(D){$\bullet$}

\drawvertex(E){$\bullet$}\drawvertex(F){$\bullet$}

\drawvertex(G){$\bullet$}\drawvertex(H){$\bullet$}

\drawvertex(I){$\bullet$}

\drawundirectededge(E,D){$a$} \drawundirectededge(F,E){$b$}

\drawundirectededge(G,F){$a$}

\drawundirectededge(H,G){$b$} \drawundirectededge(I,H){$a$}

\drawundirectededge(D,I){$b$} \drawundirectededge(I,E){$c$}

\drawundirectededge(E,G){$c$} \drawundirectededge(G,I){$c$}
\end{picture}
\end{center}
Then define, for each $n\geq 3$,  $\Omega_n$ as the union of three
copies of $\Omega_{n-1}$, rotated by $k\pi/3$ with $k=0,1,2$.

For $n=3$, one gets the following graph.
\begin{center}
\begin{picture}(400,210)
\put(110,110){$\Omega_3$}

\letvertex A=(205,210)\letvertex B=(175,160)\letvertex C=(145,110)

\letvertex D=(115,60)\letvertex E=(85,10)\letvertex F=(145,10)\letvertex G=(205,10)

\letvertex H=(265,10)\letvertex I=(325,10)

\letvertex L=(295,60)\letvertex M=(265,110)\letvertex N=(235,160)

\letvertex O=(205,110)\letvertex P=(175,60)\letvertex Q=(235,60)

\drawvertex(A){$\bullet$}\drawvertex(B){$\bullet$}

\drawvertex(C){$\bullet$}\drawvertex(D){$\bullet$}

\drawvertex(E){$\bullet$}\drawvertex(F){$\bullet$}

\drawvertex(G){$\bullet$}\drawvertex(H){$\bullet$}

\drawvertex(I){$\bullet$}\drawvertex(L){$\bullet$}\drawvertex(M){$\bullet$}

\drawvertex(N){$\bullet$}\drawvertex(O){$\bullet$}

\drawvertex(P){$\bullet$}\drawvertex(Q){$\bullet$}

\drawundirectededge(E,D){$b$} \drawundirectededge(D,C){$a$}

\drawundirectededge(C,B){$b$} \drawundirectededge(B,A){$a$}

\drawundirectededge(A,N){$b$} \drawundirectededge(N,M){$a$}

\drawundirectededge(M,L){$b$} \drawundirectededge(L,I){$a$}

\drawundirectededge(I,H){$b$} \drawundirectededge(H,G){$a$}

\drawundirectededge(G,F){$b$} \drawundirectededge(F,E){$a$}

\drawundirectededge(N,B){$c$} \drawundirectededge(O,C){$a$}

\drawundirectededge(M,O){$b$} \drawundirectededge(P,D){$c$}

\drawundirectededge(L,Q){$c$} \drawundirectededge(B,O){$c$}

\drawundirectededge(O,N){$c$} \drawundirectededge(C,P){$b$}

\drawundirectededge(P,G){$a$} \drawundirectededge(D,F){$c$}

\drawundirectededge(Q,M){$a$} \drawundirectededge(G,Q){$b$}

\drawundirectededge(H,L){$c$}\drawundirectededge(F,P){$c$}

\drawundirectededge(Q,H){$c$}
\end{picture}
\end{center}
It turns out that the weighted generating function of closed
polygons is easier to compute for $\Omega_n$ with the
"rotation-invariant" labeling, than with the "Schreier" labeling.
More precisely, we have the following
\begin{teo}\label{ultimoteocitato}
For each $n\geq 2$, the weighted generating function of closed
polygons for the graph $\Omega_n$ with the "rotation-invariant"
labeling is
$$
\Gamma_n^{cl}(a,b,c)=((a+bc)(b+ac))^\frac{7\cdot
3^{n-2}}{4}\psi_1^{3^{n-2}}\!(a,b,c)\prod_{k=2}^n\psi_k^{3^{n-k}}\!(a,b,c)\cdot
(\psi_{n+1}(a,b,c)-1)
$$
where $\psi_1(a,b,c) = \frac{1+c}{((a+bc)(b+ac))^{\frac{1}{4}}}$,
$\psi_2(a,b,c)=\frac{1+ab}{((a+bc)(b+ac))^{\frac{1}{2}}}$,\\
$\psi_3(a,b,c)=\frac{a^2b^2c^2-a^2b^2c+a^2b^2+4abc+c^2-c+1+a^2c+b^2c}{(a+bc)(b+ac)}$
and, for each $k\geq 4$,
$$
\psi_k(a,b,c)=\psi_{k-1}^2(a,b,c)-3\psi_{k-1}(a,b,c)+4.
$$
\end{teo}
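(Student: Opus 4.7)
The plan is to proceed by induction on $n$, mirroring the strategy of Theorems \ref{fiisinghanoi} and \ref{teosierpinskigenfunct}. The starting point is to write down a weighted analogue of the system \eqref{easysystem} for the four generating functions $\Gamma_n^{cl}, \Upsilon_n^{lr}, \Upsilon_n^{lu}, \Upsilon_n^{ru}$ associated with the Sierpi\'{n}ski graph $\Omega_n$: because the Sierpi\'{n}ski graphs are obtained from the Hanoi Schreier graphs by contracting the connecting edges, the recursions on the combinatorial sets $P_n$ and $L_n$ given by \eqref{ricorrenza per ising} and \eqref{ricorrenza per ising2} still apply, but with no $z$, $z^2$ or $z^3$ prefactors in the resulting functional equations, exactly as in \eqref{ricorrenza per funzione generatrice}--\eqref{ricoriconuova}.

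The essential new point is to encode the \emph{rotation-invariant} labeling inside this system. Each of the three copies of $\Omega_{n-1}$ inside $\Omega_n$ is obtained from the previous one by a rotation of the triangle; under this rotation the labels $a,b,c$ are cyclically permuted. Denote by $\rho$ the cyclic substitution $(a,b,c)\mapsto(b,c,a)$ corresponding to that rotation. Then the weighted versions of \eqref{ricorrenza per funzione generatrice} and \eqref{ricoriconuova} take the schematic form
\begin{align*}
\Gamma_{n+1}^{cl}(a,b,c) &= \Gamma_n^{cl}(a,b,c)\,\Gamma_n^{cl}(\rho(a,b,c))\,\Gamma_n^{cl}(\rho^2(a,b,c)) + \Upsilon_n^{lr}\Upsilon_n^{lu}\Upsilon_n^{ru},\\
\Upsilon_{n+1}^{\bullet}(a,b,c) &= (\text{product of permuted factors}) + (\Upsilon_n^{\bullet})^3,
\end{align*}
where the three $\Upsilon$'s are related by the same $\rho$-action (reflecting the fact that the \lq\lq left-up\rq\rq\ path in one sub-copy is the \lq\lq right-up\rq\rq\ path in the rotated one). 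I would verify the precise form of this system using the recursive description of $\Omega_n$ given in the remark preceding the theorem.

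I would then check the claimed closed form directly at $n=2$ by inspection of the picture of $\Omega_2$, and at $n=3$ to calibrate the modified initial segment $\psi_1,\psi_2,\psi_3$. The inductive step is a formal computation: substitute the ansatz $\Gamma_n^{cl}=((a+bc)(b+ac))^{7\cdot 3^{n-2}/4}\prod_k\psi_k^{3^{n-k}}(\psi_{n+1}-1)$ and a parallel ansatz for the three $\Upsilon^{\bullet}$ into the recursions; the cubic terms on the right-hand side will collapse via the identity
\begin{equation*}
\psi_{n+1}^3-3\psi_{n+1}^2+3\psi_{n+1}=\psi_{n+1}(\psi_{n+2}-1),
\end{equation*}
where $\psi_{n+2}=\psi_{n+1}^2-3\psi_{n+1}+4$, exactly as in the proof of Theorem \ref{teosierpinskigenfunct}. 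This accounts for the uniform recursion that kicks in for $k\ge 4$.

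The main obstacle I expect is the calibration of the first three $\psi_k$: the initial conditions $\Upsilon_1^{lu}=a+bc,\ \Upsilon_1^{lr}=b+ac,\ \Upsilon_1^{ru}=c+ab$ are not invariant under $\rho$, so the first few iterations of the recursion must absorb this asymmetry before the threefold symmetric quantity $((a+bc)(b+ac))^{\dots}$ can be factored cleanly out of $\Gamma_n^{cl}$ and a single sequence $\psi_k$ of rational functions in $(a,b,c)$ takes over. I would guess the explicit formulas for $\psi_1,\psi_2,\psi_3$ by tracking how the asymmetric initial data propagate through two or three applications of the recursion, and then verify that from that point on the system is indeed invariant under the uniform quadratic recursion, closing the induction.
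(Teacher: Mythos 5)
Your overall template --- set up a self-similar system for the closed-polygon generating function together with corner-to-corner path generating functions, fix a base case, and close the induction via $\psi^3-3\psi^2+3\psi=\psi\,(\psi_{k+1}-1)$ --- is the right one and is what the paper does. But the specific system you commit to is not the one satisfied by the rotation-invariant labeling, and it would not yield the stated formula. The defining feature of this labeling is that the rotation by $2\pi/3$ is a \emph{label-preserving} automorphism of $\Omega_n$: already in $\Omega_2$ it sends $a$-edges to $a$-edges, $b$-edges to $b$-edges and $c$-edges to $c$-edges, and the recursive construction preserves this. Consequently each of the three copies of $\Omega_{n-1}$ inside $\Omega_n$ is label-isomorphic to $\Omega_{n-1}$ itself, and the three path generating functions $\Upsilon_n^{lr},\Upsilon_n^{lu},\Upsilon_n^{ru}$ are all \emph{equal} as functions of $(a,b,c)$; no substitution $\rho\colon (a,b,c)\mapsto(b,c,a)$ enters anywhere. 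This is precisely why the paper introduces this labeling: the system collapses to two functions and is literally \eqref{ricorrenza per funzione generatrice}--\eqref{ricoriconuova} (i.e.\ \eqref{sistgenetiq}), after which the induction runs verbatim as in Theorem \ref{teosierpinskigenfunct}. Your twisted recursion $\Gamma_{n+1}^{cl}=\Gamma_n^{cl}\cdot(\Gamma_n^{cl}\circ\rho)\cdot(\Gamma_n^{cl}\circ\rho^{2})+\cdots$ is genuinely different: the claimed $\Gamma_n^{cl}$ is not $\rho$-invariant (already $\psi_1=(1+c)/((a+bc)(b+ac))^{1/4}$ is not), and the twisted cubes are incompatible with the identity producing $\psi_k=\psi_{k-1}^2-3\psi_{k-1}+4$.

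The second concrete problem is the base case. The rotation-invariant labeling is defined only from level $2$ onward, and $\Omega_2$ is \emph{not} assembled from three Schreier-labeled copies of $\Omega_1$: its three corner triangles carry rotated label patterns and its middle triangle is all $c$'s. So the initial data $\Gamma_1^{cl}=1+abc$, $\Upsilon_1^{lr}=ac+b$, $\Upsilon_1^{lu}=a+bc$, $\Upsilon_1^{ru}=c+ab$ that you import from the Hanoi/Schreier setting are the wrong ones here. The induction must start from quantities read off directly from the picture of $\Omega_2$, namely $\Upsilon_2=(1+c)(1+ab)(a+bc)(b+ac)$ and the corresponding $\Gamma_2^{cl}$; the exceptional factors $\psi_1,\psi_2,\psi_3$ are then not guessed by propagating asymmetric level-$1$ data, but are simply a factorization of this level-$2$ input arranged so that the uniform quadratic recursion takes over from $k=4$ on.
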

\begin{proof}
Consider the graph $\Omega_n$. For each $n\geq 2$, define the sets
$P_n$ and $L_n$ as in Subsection \ref{grafohanoigrafo} and let
$\Gamma_n^{cl}(a,b,c)$ and $\Upsilon_n(a,b,c)$ be the associated
weighted generating functions. By using the symmetry of the
labeling, one can check that these functions satisfy the following
equations
\begin{eqnarray}\label{sistgenetiq}
\begin{cases}
\Gamma_n^{cl}(a,b,c) = \left(\Gamma_{n-1}^{cl}(a,b,c)\right)^3+\Upsilon_{n-1}^3(a,b,c)\\
\Upsilon_n(a,b,c) =
\Upsilon_{n-1}^3(a,b,c)+\Upsilon_{n-1}^2(a,b,c)\Gamma_{n-1}^{cl}(a,b,c).
\end{cases}
\end{eqnarray}
with the initial conditions
$$
\begin{cases}
\Gamma_2^{cl}=(1+c)(1+ab)(a^2b^2c^2-a^2b^2c+a^2b^2+4abc-abc^2-ab+c^2-c+1)\\
\Upsilon_2= (1+c)(1+ab)(a+bc)(b+ac).
\end{cases}
$$
As in the proof of Theorem \ref{teosierpinskigenfunct}, one shows
by induction on $n$ that the solutions of the system are
$$
\begin{cases}
\Gamma_n^{cl}=((a+bc)(b+ac))^\frac{7\cdot
3^{n-2}}{4}\psi_1^{3^{n-2}}\!(a,b,c)\prod_{k=2}^n\psi_k^{3^{n-k}}\!(a,b,c)\!\cdot\!
(\psi_{n+1}\!(a,b,c)-1)\\
\Upsilon_n=((a+bc)(b+ac))^\frac{7\cdot
3^{n-2}}{4}\psi_1^{3^{n-2}}(a,b,c)\prod_{k=2}^n\psi_k^{3^{n-k}}(a,b,c)
\end{cases}
$$
\end{proof}
\begin{os}\rm
Although the labels $a$ and $b$ are not symmetric to the label $c$
in the "rotation-invariant" labeling,  computations show that the
functions $\Gamma_n^{cl}(a,1,1)$, $\Gamma_n^{cl}(1,b,1)$ and
$\Gamma_n^{cl}(1,1,c)$ are the same in this case as in the case of
the "Schreier" labeling. It follows that the values of the mean
and the variance of the random variables $a_n, b_n, c_n$ remain
the same as in the "Schreier" labeling, see Proposition
\ref{Schrlabel}.
\end{os}
\subsection{Correspondences via Fisher's Theorem}\label{fisherfisher}

In \cite{fisher} M. Fisher proposed a method of computation for
the partition function of the Ising model on a (finite) planar
lattice $Y$ by relating it to the partition function of the dimers
model (with certain weights) on another planar lattice $Y^\Delta$
constructed from $Y$. (The latter partition function can then be
found by computing the corresponding Pfaffian given by Kasteleyn's
theorem.) This method uses the expression \eqref{polygons} for the
partition function in terms of the generating function of closed
polygons in $Y$. The new lattice $Y^{\Delta}$ is constructed in
such a way that Ising polygon configurations on $Y$ are in
one-to-one correspondence with dimer configurations on
$Y^{\Delta}$. In order to have equality of generating functions
however, the edges of $Y^{\Delta}$ should be weighted in such a
way that the edges coming from $Y$ have the same weight
$\tanh(\beta J_{i,j})$ as in the RHS of \eqref{polygons}, and
other edges have weight 1.

Applying Fisher's construction to Sierpi\'nski graphs, one
concludes easily that if $Y=\Omega_n$ for some $n\geq 1$, then
$Y^\Delta = \tilde\Sigma_{n+1}$, the $(n+1)$-st Schreier graph of
the Hanoi Towers group $H^{(3)}$ with three corner vertices
deleted. Note that the corner vertices are the only vertices in
$\Sigma_n$ with loops attached to them, and so it is anyway
natural to forget about them when counting dimer coverings. The
construction consists in applying to $Y$ the following
substitutions, where edges labeled by $e$ in $Y^{\Delta}$ are in
bijection with edges in $Y$, and should be assigned weight
$\tanh(\beta J_{i,j})$. Other edges should be assigned weight $1$.
\unitlength=0,3mm
\begin{center}
\begin{picture}(400,130)
\letvertex A=(20,90)\letvertex B=(5,65)
\letvertex C=(35,65)\letvertex D=(75,65)
\letvertex E=(90,90)\letvertex F=(120,90)
\letvertex G=(135,65)

\letvertex H=(250,105)\letvertex I=(235,80)

\letvertex L=(220,55)\letvertex M=(265,80)

\letvertex N=(250,55)

\letvertex O=(350,125)\letvertex P=(335,100)

\letvertex Q=(320,75)\letvertex R=(305,50)\letvertex S=(290,25)\letvertex T=(275,0)

\letvertex U=(350,75)\letvertex V=(365,75)\letvertex Z=(320,25)\letvertex X=(335,0)

\put(47,75){$\Longrightarrow$}\put(280,75){$\Longrightarrow$}

\drawvertex(A){$\bullet$}

\drawvertex(E){$\bullet$}\drawvertex(F){$\bullet$}

\drawvertex(I){$\bullet$}

\drawvertex(P){$\bullet$}\drawvertex(Q){$\bullet$}

\drawvertex(U){$\bullet$}\drawvertex(R){$\bullet$}

\drawvertex(S){$\bullet$}\drawvertex(Z){$\bullet$}

\drawundirectededge(A,B){}\drawundirectededge(A,C){}\drawundirectededge(D,E){e}

\drawundirectededge(F,E){}\drawundirectededge(F,G){e}\drawundirectededge(H,I){}

\drawundirectededge(I,L){}\drawundirectededge(M,I){}\drawundirectededge(I,N){}

\drawundirectededge(P,O){e}\drawundirectededge(Q,P){}\drawundirectededge(P,U){}

\drawundirectededge(V,U){e}\drawundirectededge(R,Q){}\drawundirectededge(U,Q){}

\drawundirectededge(S,R){}\drawundirectededge(R,Z){}\drawundirectededge(Z,S){}

\drawundirectededge(T,S){e}\drawundirectededge(Z,X){e}
\end{picture}
\end{center}
The correspondence between closed polygons in $Y$ and dimer
coverings of $Y^\Delta$ is as follows: if an edge in $Y$ belongs
to a closed polygon, then the corresponding $e$-edge in $Y^\Delta$
does not belong to the dimer covering of $Y^\Delta$ associated
with that closed polygon, and vice versa.

The following pictures give an example of a closed polygon in
$\Omega_2$ and of the associated dimer covering of
$\tilde\Sigma_3$: \unitlength=0,25mm
\begin{center}
\begin{picture}(400,190)
\letvertex D=(60,110)\letvertex E=(30,60)\letvertex F=(0,10)\letvertex G=(60,10)

\letvertex H=(120,10)\letvertex I=(90,60)

\thinlines \drawvertex(D){$\bullet$}

\drawvertex(E){$\bullet$}\drawvertex(F){$\bullet$}

\drawvertex(G){$\bullet$}\drawvertex(H){$\bullet$}

\drawvertex(I){$\bullet$}

\drawundirectededge(E,D){} \drawundirectededge(D,I){}

 \drawundirectededge(E,G){}

\drawundirectededge(G,I){}

\thicklines \drawundirectededge(I,E){}\drawundirectededge(F,E){}

\drawundirectededge(G,F){}

\drawundirectededge(H,G){}\drawundirectededge(I,H){}
\letvertex B=(255,154)

\letvertex C=(245,137)\letvertex D=(230,112)

\letvertex E=(210,77)\letvertex F=(195,52)

\letvertex G=(185,35)\letvertex I=(200,10)

\letvertex L=(220,10)\letvertex M=(250,10)

\letvertex N=(290,10)

\letvertex O=(320,10)\letvertex P=(340,10)\letvertex R=(355,35)

\letvertex S=(345,52)\letvertex T=(330,77)

\letvertex U=(310,112)\letvertex V=(295,137)\letvertex Z=(285,154)

\letvertex J=(260,112)\letvertex K=(280,112)

\letvertex W=(225,52)

\letvertex Y=(235,35)\letvertex X=(315,52)

\letvertex d=(305,35)

\drawvertex(B){$\bullet$}

\drawvertex(C){$\bullet$}\drawvertex(D){$\bullet$}

\drawvertex(E){$\bullet$}\drawvertex(F){$\bullet$}

\drawvertex(G){$\bullet$} \drawvertex(I){$\bullet$}

\drawvertex(L){$\bullet$}\drawvertex(M){$\bullet$}

\drawvertex(N){$\bullet$}\drawvertex(O){$\bullet$}

\drawvertex(P){$\bullet$}

\drawvertex(R){$\bullet$}\drawvertex(S){$\bullet$}

\drawvertex(T){$\bullet$}

\drawvertex(U){$\bullet$}\drawvertex(V){$\bullet$}

\drawvertex(Z){$\bullet$}\drawvertex(W){$\bullet$}

\drawvertex(J){$\bullet$}\drawvertex(K){$\bullet$}

\drawvertex(X){$\bullet$}\drawvertex(Y){$\bullet$}

\drawvertex(d){$\bullet$}

\thinlines  \drawundirectededge(C,J){} \drawundirectededge(E,D){}

\drawundirectededge(W,F){}

\drawundirectededge(L,Y){}\drawundirectededge(N,M){}

\drawundirectededge(d,O){} \drawundirectededge(R,P){}

\drawundirectededge(S,X){} \drawundirectededge(U,T){}

\drawundirectededge(K,V){}\drawundirectededge(D,C){}\drawundirectededge(X,T){}\drawundirectededge(S,R){}

\drawundirectededge(P,O){}\drawundirectededge(d,N){}\drawundirectededge(Y,M){}\drawundirectededge(G,F){}\drawundirectededge(E,W){}

\drawundirectededge(V,U){}\drawundirectededge(I,L){}\drawundirectededge(B,Z){}\drawundirectededge(J,K){}

\thicklines \drawundirectededge(B,C){}\drawundirectededge(V,Z){}

\drawundirectededge(J,D){}\drawundirectededge(U,K){}\drawundirectededge(F,E){}\drawundirectededge(M,L){}

\drawundirectededge(Y,W){}\drawundirectededge(T,S){}\drawundirectededge(X,d){}\drawundirectededge(O,N){}\drawundirectededge(R,P){}

\drawundirectededge(G,I){}
\end{picture}
\end{center}
If, for a certain $n\geq 1$, the Sierpi\'nski graph $\Omega_n$ is
considered with the "Schreier" labeling, then the labeling of the
graph $\tilde\Sigma_{n+1}$ given by Fisher's construction will be
a restriction of the usual Schreier labeling of $\Sigma_{n+1}$.
More precisely, only the edges that connect copies of
$\Sigma_{n-1}$ but not copies of $\Sigma_n$ will be labeled (other
edges have weight $1$), and the labels are the same as in the
standard labeling of $\Sigma_{n+1}$ as a Schreier graph of the
group $H^{(3)}$. The following picture represents $\tilde\Sigma_3$
as $Y^\Delta$ with $Y=\Omega_2$ with the "Schreier" labeling.
\unitlength=0,3mm
\begin{center}
\begin{picture}(400,160)
\letvertex B=(160,154)

\letvertex C=(150,137)\letvertex D=(135,112)

\letvertex E=(115,77)\letvertex F=(100,52)

\letvertex G=(90,35)\letvertex I=(105,10)

\letvertex L=(125,10)\letvertex M=(155,10)

\letvertex N=(195,10)

\letvertex O=(225,10)\letvertex P=(245,10)\letvertex R=(260,35)

\letvertex S=(250,52)\letvertex T=(235,77)

\letvertex U=(215,112)\letvertex V=(200,137)\letvertex Z=(190,154)

\letvertex J=(165,112)\letvertex K=(185,112)

\letvertex W=(130,52)

\letvertex Y=(140,35)\letvertex X=(220,52)

\letvertex d=(210,35)

\drawvertex(B){$\bullet$}

\drawvertex(C){$\bullet$}\drawvertex(D){$\bullet$}

\drawvertex(E){$\bullet$}\drawvertex(F){$\bullet$}

\drawvertex(G){$\bullet$} \drawvertex(I){$\bullet$}

\drawvertex(L){$\bullet$}\drawvertex(M){$\bullet$}

\drawvertex(N){$\bullet$}\drawvertex(O){$\bullet$}

\drawvertex(P){$\bullet$}

\drawvertex(R){$\bullet$}\drawvertex(S){$\bullet$}

\drawvertex(T){$\bullet$}

\drawvertex(U){$\bullet$}\drawvertex(V){$\bullet$}

\drawvertex(Z){$\bullet$}\drawvertex(W){$\bullet$}

\drawvertex(J){$\bullet$}\drawvertex(K){$\bullet$}

\drawvertex(X){$\bullet$}\drawvertex(Y){$\bullet$}

\drawvertex(d){$\bullet$}

\drawundirectededge(C,J){1} \drawundirectededge(E,D){1}

\drawundirectededge(W,F){1}

\drawundirectededge(L,Y){1}\drawundirectededge(N,M){1}

\drawundirectededge(d,O){1} \drawundirectededge(R,P){1}

\drawundirectededge(S,X){1} \drawundirectededge(U,T){1}

\drawundirectededge(K,V){1}\drawundirectededge(D,C){1}\drawundirectededge(X,T){1}\drawundirectededge(S,R){$b$}

\drawundirectededge(P,O){$c$}\drawundirectededge(N,d){1}\drawundirectededge(Y,M){1}\drawundirectededge(G,F){$b$}\drawundirectededge(E,W){1}

\drawundirectededge(V,U){1}\drawundirectededge(L,I){$a$}\drawundirectededge(B,Z){1}\drawundirectededge(K,J){$b$}

\drawundirectededge(C,B){$c$}\drawundirectededge(Z,V){$a$}

\drawundirectededge(J,D){1}\drawundirectededge(U,K){1}\drawundirectededge(F,E){1}\drawundirectededge(M,L){1}

\drawundirectededge(W,Y){$c$}\drawundirectededge(T,S){1}\drawundirectededge(d,X){$a$}\drawundirectededge(O,N){1}\drawundirectededge(R,P){1}

\drawundirectededge(I,G){1}
\end{picture}
\end{center}
\begin{os}\rm One can wonder what Fisher's construction gives for $\{\Sigma_n\}_{n\geq 1}$. It turns out that
if $Y=\Sigma_n$, the $n$-th Schreier graph of $H^{(3)}$, then
$Y^\Delta=\tilde\Sigma_{n+1}$, the same as for $Y=\Omega_n$, the
$n$-th Sierpi\'nski graph .
\end{os}

\end{document}